\newcommand{\bbC}{\mathbb{C}}
\newcommand{\bbF}{\mathbb{F}}
\newcommand{\bbH}{\mathbb{H}}
\newcommand{\bbL}{\mathbb{L}}
\newcommand{\bbQ}{\mathbb{Q}}
\newcommand{\bbR}{\mathbb{R}}
\newcommand{\bbZ}{\mathbb{Z}}
\newcommand{\rmB}{\mathrm{B}}
\newcommand{\rmC}{\mathrm{C}}
\newcommand{\rmE}{\mathrm{E}}
\newcommand{\rmH}{\mathrm{H}}
\newcommand{\rmK}{\mathrm{K}}
\newcommand{\rmP}{\mathrm{P}}
\newcommand{\rmS}{\mathrm{S}}
\newcommand{\rmZ}{\mathrm{Z}}
\newcommand{\rmd}{\mathrm{d}}
\renewcommand{\int}{\mathrm{int}}
\DeclareMathOperator{\hofib}{hofib}
\DeclareMathOperator{\Gr}{Gr}
\DeclareMathOperator{\Tor}{Tor}
\DeclareMathOperator{\Hom}{Hom}
\DeclareMathOperator{\Ker}{Ker}
\DeclareMathOperator{\Cell}{Cell}
\newtheorem{theorem}{Theorem}
\newtheorem{proposition}[theorem]{Proposition}
\newtheorem{corollary}[theorem]{Corollary}
\theoremstyle{definition}
\newtheorem{definition}[theorem]{Definition}
\newtheorem{example}[theorem]{Example}
\newtheorem{examples}[theorem]{Examples}
\newtheorem{remark}[theorem]{Remark}
\newtheorem{notation}[theorem]{Notation}
\newtheorem{convention}[theorem]{Convention}
\numberwithin{theorem}{section}
\numberwithin{equation}{section}
\numberwithin{figure}{section}
\title{\bf Preludes to the Eilenberg--Moore and the Leray--Serre spectral sequences}
\author{Frank Neumann and Markus Szymik}
\date{\mydate\today}
\begin{document}

\maketitle


The Leray--Serre and the Eilenberg--Moore spectral sequences are fundamental tools for computing the cohomology of a group or, more generally, of a space. We describe the relationship between these two spectral sequences when both of them share the same abutment. There exists a joint tri-graded refinement of the Leray--Serre and the Eilenberg--Moore spectral sequence. This refinement involves two more spectral sequences, the preludes from the title, which abut to the initial terms of the Leray--Serre and the Eilenberg--Moore spectral sequence, respectively. We show that one of these always degenerates from its second page on and that the other one satisfies a local-to-global property: It degenerates for all possible base spaces if and only if it does so when the base space is contractible. 


\section{Introduction}


We consider principal fibration sequences~$\Omega Z\to X\to Y\to Z$ for connected spaces~$X$,~$Y$, and~$Z$, with the latter also simply-connected or at least nilpotent. For instance, these could be classifying spaces for a central extension of discrete groups, with~$Z\simeq\rmK(A,2)$ an Eilenberg--Mac Lane space for the abelian kernel~$A$. In general, there are two spectral sequences that converge to the cohomology of~$X$ with coefficients in a field~$K$, say: a Leray--Serre spectral sequence for~\hbox{$\Omega Z\to X\to Y$} in the first quadrant and an Eilenberg--Moore spectral sequence for~\hbox{$X\to Y\to Z$} in the second. 
It has long been recognised that these two spectral sequences often process the same information in different ways~(e.g., see~\cite{Leary}).

The main goal of this paper is to replace intuition with certainty. We provide systematic tools and examples showing that our assumptions are reasonable. It turns out that a comparison{\it~is} always possible, and that, in favorable cases, it leads to computational effects. Our main point is the following result.


\begin{theorem}\label{thm:existence}
For every principal fibration sequence~$\Omega Z\to X\to Y\to Z$ there are two spectral sequences starting with the same tri-graded groups~$\rmE^{s,t,u}_1$, involving different pairs~$(s, u)$ and~$(t, u)$ of indices and converging to the~$\rmE_1$ pages of the Leray--Serre spectral sequence for~\hbox{$\Omega Z\to X\to Y$} and the Eilenberg--Moore spectral sequence for~\hbox{$X\to Y\to Z$}, respectively, in turn abutting both to~$\rmH^{s+t+u}(X)$.
\end{theorem}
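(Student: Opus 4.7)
The plan is to exhibit both spectral sequences as two different ways of organizing the cohomology of a single tri-complex built from the standard cobar-type cosimplicial resolution of~$X$. First, model $X$ as $\Tot(B^\bullet)$, where $B^s = Y \times Z^s$ with coface and codegeneracy operators built from $Y \to Z$, the diagonal on $Z$, and the basepoint $* \to Z$; under the simple-connectivity/nilpotence assumption on $Z$, this totalization agrees with $X$ and the equivalence is respected by cellular cochains with field coefficients~$K$. Fix cellular structures on $Y$ and $Z$; applying cellular cochains and the Künneth formula yields a tri-complex
\[
C^{s,t,u} \;=\; C^u(Y) \otimes \bigl[C^*(Z)^{\otimes s}\bigr]^t
\]
carrying three commuting differentials of total degree one: the cellular differentials $d_Y$ and $d_Z$ on the respective factors, and the cosimplicial coboundary~$\delta$. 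The associated total complex computes $\rmH^*(X)$; filtering by $u$ recovers the Leray--Serre spectral sequence, and filtering by $s$ recovers the Eilenberg--Moore spectral sequence.

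Next, define the common tri-graded $\rmE^{s,t,u}_1$ by first taking $d_Z$-cohomology, which by Künneth yields
\[
\rmE^{s,t,u}_1 \;=\; C^u(Y) \otimes \bigl[\rmH^*(Z)^{\otimes s}\bigr]^t
\]
still carrying the two commuting differentials $d_Y$ and~$\delta$. The two preludes then arise as the two orders in which one may process these remaining differentials. For the Eilenberg--Moore prelude, filter by the cosimplicial degree $s$ so that the next active differential is~$d_Y$; the resulting $\rmE^{s,t,u}_2 = \rmH^u(Y) \otimes [\rmH^*(Z)^{\otimes s}]^t$ is precisely the $\rmE_1$-page of EM, and since no further differential remains within this filtration, the prelude degenerates from $\rmE_2$ on. For the Leray--Serre prelude, filter by $u$ so that the next active differential is~$\delta$; the resulting $\rmE^{s,t,u}_2 = C^u(Y) \otimes \Tor^{\rmH^*(Z)}_{-s}(K,K)^t$, and the higher pages realize, cell-by-cell over $Y$, the classical Eilenberg--Moore spectral sequence for the path fibration $\Omega Z \to PZ \to Z$, converging to $C^u(Y) \otimes \rmH^{s+t}(\Omega Z)$---the $\rmE_1$-page of LS, with trivial local coefficients because the fibration is principal.

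The main obstacle will be the convergence and differential identification for the Leray--Serre prelude: one needs to show that its higher differentials coincide with those of the classical Eilenberg--Moore spectral sequence for $\Omega Z$, and that convergence holds under the standing hypotheses. I would handle this by comparing the cosimplicial sub-object $Z^\bullet$ (obtained from $Y \times Z^\bullet$ by collapsing $Y$ to the basepoint) with the ambient tri-complex via naturality of the cobar construction, and by invoking standard convergence criteria for the Eilenberg--Moore spectral sequence under nilpotence of~$Z$. The Eilenberg--Moore prelude is by comparison essentially formal, since $d_Y$ acts only on the $C^u(Y)$-factor once $d_Z$-cohomology has been taken. Tracing the two filtrations back to the total complex then confirms that both preludes abut to $\rmH^{s+t+u}(X)$ via their respective secondary spectral sequences LS and EM.
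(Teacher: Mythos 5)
Your architecture is, in essence, the paper's own: a single resolution of $X$ over $Z$ relative to $Y$ carrying two auxiliary gradings (cellular degree of $Y$ and cobar degree), whose two filtrations yield the Leray--Serre and Eilenberg--Moore spectral sequences, and whose common double associated graded supplies the shared tri-graded page $\rmE^{s,t,u}_1$, with the two preludes obtained by processing the two residual differentials in either order. The cosimplicial space $Y\times Z^\bullet$ you start from is exactly Rector's geometric model for the bar complex $B=\rmB(K,\rmC(Z),\rmC(Y))\simeq\rmC(X)$ that the paper uses, and your ``two orders of processing the remaining differentials'' is Deligne's Zassenhaus square. So this is not a different route, and the overall plan is sound.

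There is, however, one genuine defect in the execution. A tri-complex $C^u(Y)\otimes[C^*(Z)^{\otimes s}]^t$ with \emph{three strictly commuting differentials} does not exist if $C^*(Z)$ means cellular cochains: the cosimplicial coboundary~$\delta$ requires a strictly associative product on $C^*(Z)$ (dually, a strictly coassociative diagonal on cellular chains), and the diagonal of $Z$ is not cellular; any cellular approximation is coassociative only up to homotopy, so $\delta^2=0$ fails on the nose. The paper sidesteps this by never replacing $\rmC(Z)$ or $\rmC(Y)$ by cellular cochains: it keeps singular cochains, where the cup product is strict, and lets the cell structure of $Y$ enter only through the skeletal filtration $F^sB=\rmB(K,\rmC(Z),\rmC(Y,Y^{s-1}))$. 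The ``tri-complex'' is thereby replaced by a bifiltered complex, the common page is $\rmE^{s,t,u}_1=\rmH^{s+t+u}(\Gr_F^s\Gr_W^tB)$, and cellular cochains of $Y$ appear only after passing to cohomology, where $\rmH^\bullet(Y^s,Y^{s-1})$ is legitimately $\Cell^s(Y;K)$. A second, smaller point: identifying the abutment $\rmH^n(\Gr_F^sB)$ of the Leray--Serre prelude with the Leray--Serre $\rmE_1$ page $\rmH^n(X^s,X^{s-1})$ is not automatic; it requires a relative form of the Eilenberg--Moore equivalence (the paper's Proposition~\ref{prop:relativeEM}), which your appeal to ``naturality of the cobar construction'' should be expanded to supply.
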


These two spectral sequences, which are introduced at the beginning of Section~\ref{sec:comparison}, are the{\it~preludes} from the title. Existence results for spectral sequences are nowadays mostly a formality, and Theorem~\ref{thm:existence} follows a pattern introduced by Deligne~\cite{Deligne:II}~(see also Miller~\cite{Miller}). The crucial point of the above result is the agreement of the initial terms of the preludes, and its main thrust comes from our two accompanying degeneracy results. For the Eilenberg--Moore spectral sequence, we have the following:

\begin{theorem}
The preludes to the Eilenberg--Moore spectral sequences always degenerate from their~$\rmE_2$ page on. They degenerate from their~$\rmE_1$ page on if and only if the space~$Y$ is~$K$--minimal.
\end{theorem}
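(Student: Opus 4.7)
The plan is to identify the~$\rmE_0$ and~$\rmE_1$ pages of the prelude concretely from the tri-graded construction underlying Theorem~\ref{thm:existence}, and then to exhibit structural reasons why no higher differentials can survive. Since the prelude converges to the~$\rmE_1$ page of the Eilenberg--Moore spectral sequence, and the latter is classically obtained from a bar/cobar resolution, I expect the prelude to arise by filtering that resolution by a second internal grading. Consequently~$d_0$ should capture the internal cochain differentials on~$C^*(Y)$ and~$C^*(Z)$, while~$d_1$ should capture the bar differential at the level of cohomology.

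First, I would make the tri-grading explicit. Writing~$(s,t,u)$ for the resolution degree, the internal cohomological degree, and the extra degree produced by the double filtration, the~$\rmE_0$ page should be a sum of tensor products of the form~$\overline{C^*(Z)}^{\otimes s}\otimes C^*(Y)$ bigraded by~$(t,u)$, with~$d_0$ the total internal differential. Taking cohomology in the~$(t,u)$ directions then produces the corresponding bar construction on the cohomology algebras~$\rmH^*(Z)$ and~$\rmH^*(Y)$, after which one more differential gives the~$\rmE_1$ page of the Eilenberg--Moore spectral sequence.

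The degeneracy at~$\rmE_2$ should then follow from the observation that~$d_r$ for~$r\geq 2$ would be required to move between bar filtration levels by more than one step while respecting the remaining grading, whereas the underlying chain-level differential strictly raises the bar degree by exactly one. This yields an immediate vanishing for structural reasons, not accidental ones, so the argument should hold in full generality. For the ``if and only if'' part, I would unpack~$K$--minimality of~$Y$ as the statement that a cochain model of~$Y$ over~$K$ admits a representative whose differential has no linear part, and then identify~$d_1$ of the prelude with precisely that linear part on a suitably normalized complex. Both directions then fall out: $K$--minimality forces~$d_1=0$, and conversely the vanishing of~$d_1$ lets one extract a minimal representative.

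The main obstacle will be step one, the clean identification of the prelude's pages in terms of the tri-graded complex, since the indexing conventions of the three filtrations must be coordinated with those of the bar/cobar construction and the Eilenberg--Moore abutment. Once this is in place, the degeneracy argument at~$\rmE_2$ is essentially bookkeeping about differentials on an already bigraded complex; the substantive content lies in pinning down what the prelude is at the chain level and in translating~$K$--minimality into a statement about chain-level linear parts.
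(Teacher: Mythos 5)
Your proposal misidentifies the prelude to the Eilenberg--Moore spectral sequence, and as a result the argument is aimed at the wrong object. The prelude in question is the spectral sequence obtained by filtering the associated graded~$\Gr_W^tB$ of the bar complex~$B=\rmB(K,\rmC(Z),\rmC(Y))$ by the \emph{cellular filtration of~$Y$}; it abuts to~$\rmH^\bullet(\Gr_W^tB)\cong(\overline{\rmH}^\bullet(Z)^{\otimes-t}\otimes\rmH^\bullet(Y))^{n-t}$, which is the~$\rmE_1$ page of the Eilenberg--Moore spectral sequence, and the~$\rmd_1$ of this prelude is the cellular cochain differential of~$Y$ tensored with~$\overline{\rmH}^\bullet(Z)^{\otimes-t}$ --- not the bar differential. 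What you describe (internal differentials as~$\rmd_0$, then the bar differential on cohomology as~$\rmd_1$) is the Eilenberg--Moore spectral sequence itself, whose~$\rmd_1$ produces the Tor groups on its~$\rmE_2$ page. The cellular filtration of~$Y$, which is the second filtration in the Zassenhaus square and the entire reason~$K$--minimality of~$Y$ enters the statement, is absent from your construction; relatedly, the notion of~$K$--minimality here is the existence of a CW structure whose cellular chain complex over~$K$ has zero differential, not the ``no linear part'' condition on a cochain model that you invoke.

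Your structural reason for degeneration at~$\rmE_2$ --- that the chain-level differential raises the bar degree by exactly one, so no~$\rmd_r$ with~$r\geqslant2$ can occur --- is not valid for spectral sequences of filtered complexes: $\rmd_r$ is defined on classes~$x\in F^p$ with~$\rmd x\in F^{p+r}$, which can happen by cancellation even when~$\rmd$ raises the filtration degree by at most one. The bar filtration itself furnishes a counterexample: for~$Z=\rmB\bbZ/\ell$ with~$\ell$ an odd prime, the Eilenberg--Moore spectral sequence has a nonzero~$\rmd_{\ell-1}$ (Example~\ref{ex:EM_diffs}), although the external bar differential raises the bar degree by exactly one. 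The correct mechanism is different: from~$\rmE_1$ on, the prelude is the exact functor~$?\mapsto\overline{\rmH}^\bullet(Z)^{\otimes-t}\otimes\,?$ applied to the skeletal spectral sequence of~$Y$ (Proposition~\ref{prop:apply_exact_first}), whose~$\rmE_1$ page is concentrated in a single row, so all~$\rmd_r$ with~$r\geqslant2$ vanish for positional reasons; and it degenerates already at~$\rmE_1$ exactly when the cellular cochain differential of~$Y$ over~$K$ vanishes, which is the definition of~$K$--minimality.
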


This is proven as Theorem~\ref{thm:LS_is_short} below. It requires the notion of{\it~$K$--minimality}, which refers to the existence of a cellular structure such that the cellular chain complex with coefficients in the field~$K$ is minimal in the sense that the differential is trivial~(see Definition~\ref{def:minimal}). Needless to say, many important classes of spaces have this property~(see Examples~\ref{ex:good_spaces_1} for a start). The counterpart of Theorem~\ref{thm:LS_is_short} for the Leray--Serre spectral sequence is Theorem~\ref{thm:EM_is_local}, a local-to-global principle for degeneracy:

\begin{theorem}
Given a space~$Z$, the preludes to the Leray--Serre spectral sequences degenerate for all~$Y$ from their~$\rmE_2$ page on if and only if this holds for a single point~\hbox{$Y=\star$}.
\end{theorem}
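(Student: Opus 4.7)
The ``only if'' direction is immediate: specialize the hypothesis to~$Y=\star$, with any basepoint of~$Z$ as the structure map. The content lies in the converse, and the plan is to reduce the degeneracy of the prelude for arbitrary~$Y$ to its degeneracy for~$\star$ via the cell structure of~$Y$ together with naturality.

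The prelude to Leray--Serre arises from a double filtration on~$X$: the Serre filtration~$F_pX=\pi^{-1}(Y^p)$ by preimages of the skeleta of~$Y$, refined by a secondary filtration whose associated graded computes~$\rmH^*(\Omega Z)$ from the universal principal fibration~$\Omega Z\to PZ\to Z$ pulled back along~$Y\to Z$. When restricted to a single cell~$e\subset Y$, the pulled-back fibration is trivial, so the prelude over~$e$ is canonically isomorphic to the prelude over~$\star$. Gluing over the cells of~$Y$, the~$\rmE_1$ page of the prelude for~$Y$ is identified with the cellular cochains~$\rmC^s(Y;-)$ with local coefficients in the bigraded module underlying the prelude for~$\star$; the~$d_1$ differential splits into the cellular coboundary on~$Y$ and the local differential, producing
\[
\rmE_2^{s,t,u}(\text{prelude for }Y) \;\cong\; \rmH^s\bigl(Y;\,\rmE_2^{0,t,u}(\text{prelude for }\star)\bigr).
\]

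From~$\rmE_2$ onward, the prelude for~$Y$ is a spectral sequence of modules over~$\rmH^*(Y)$, and its higher differentials~$d_r$ are~$\rmH^*(Y)$-linear and determined on generators by the corresponding differentials of the local prelude. Consequently, the vanishing of~$d_r$ for all~$r\geq 2$ in the prelude for~$\star$ forces the same vanishing in the prelude for every~$Y$, completing the converse. The main obstacle is to make this module-structure argument rigorous: one must identify the prelude for~$Y$ as a spectral sequence of~$\rmH^*(Y)$-modules whose higher differentials factor through those of the local prelude. This requires a careful unpacking of the Deligne-style construction of the preludes in Section~\ref{sec:comparison}, together with the naturality of that construction in the structure map~$Y\to Z$.
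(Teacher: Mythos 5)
Your reduction to a point via the cells of~$Y$ is the right idea, but the way you execute it misdescribes the spectral sequence in question and then leaves the essential step as an admitted obstacle. The prelude to the Leray--Serre spectral sequence is~\eqref{eq:s_spec_seq}: it is the spectral sequence of~$\Gr_F^sB$ with the induced bar filtration, converging for each \emph{fixed}~$s$ to~$\rmH^n(X^s,X^{s-1})$, i.e.\ to the~$\rmE_1$ page of the Leray--Serre spectral sequence. Its~$\rmd_1$ is only the ``local'' (external bar) differential; the cellular coboundary of~$Y$ is the~$\rmd_1$ of the \emph{other} prelude~\eqref{eq:t_spec_seq} and does not act here, since~$s$ never moves. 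Consequently your formula~$\rmE_2^{s,t,u}\cong\rmH^s\bigl(Y;\rmE_2^{0,t,u}(\star)\bigr)$ is not the~$\rmE_2$ page of the prelude: the correct answer is~$\Cell^s\bigl(Y;\Tor^{\rmH^\bullet(Z)}_{-t}(K,K)^u\bigr)$ (Proposition~\ref{prop:description(3.5)}), with cellular cochains rather than cohomology; the two agree only for~$K$--minimal~$Y$, whereas the theorem is claimed for all~$Y$. By folding the cellular coboundary into~$\rmd_1$ you are analysing a different (totalized) object, not the prelude whose degeneracy is asserted.

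The second gap is the passage to higher differentials. You invoke an~$\rmH^\bullet(Y)$--module structure on the prelude with~$\rmH^\bullet(Y)$--linear differentials ``determined on generators'' by the local prelude, and you concede this is the main unproved point. It is also unnecessary. Since~$\Cell^s(Y;V)=\rmH^\bullet(Y^s,Y^{s-1})\otimes_KV$ is an exact functor in~$V$ --- in fact a direct sum of copies of~$V$, one for each~$s$--cell --- the whole filtered complex~$\Gr_F^sB\simeq\rmB(K,\rmC(Z),\rmC(Y^s,Y^{s-1}))$ is equivalent, compatibly with the bar filtration, to~$\Cell^s(Y;-)$ applied to the filtered complex~$\rmB(K,\rmC(Z),K)$ defining the loop-space Eilenberg--Moore spectral sequence. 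Hence the prelude for~$Y$ is, from~$\rmE_1$ on and including \emph{all} higher differentials, a direct sum of (shifted) copies of the prelude for~$\star$; degeneracy of the latter from~$\rmE_2$ on forces the same for the former with no module-theoretic input. This is exactly Proposition~\ref{prop:apply_exact}, and it is the missing lemma that turns your sketch into a proof.
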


We will say that a space~$Z$ to which the result applies is{\it~$K$--unbarred}~(see Definition~\ref{def:unbarred}). Once again, many important classes of spaces have this property~(see Examples~\ref{ex:good_spaces_2}). In particular, spaces with polynomial cohomology~(Proposition~\ref{prop:polynomial_implies_unbarred}) and suspensions~(Proposition~\ref{prop:suspensions_are_unbarred}) are unbarred.


An abundance of examples demonstrates that neither the Eilenberg--Moore nor the Leray--Serre spectral sequence is more efficient (i.e., closer to the abutment) than the other. Sometimes, one is broken in half, with either algebraic or geometric differentials prepended in one of the preludes. Still, in general, both of them are necessary, and our results suggest always considering the whole quartet of spectral sequences featured in Theorem~\ref{thm:existence}. 


Here is an outline of this article. In the following Section~\ref{sec:ss}, we briefly review the construction of the spectral sequence of a filtered complex and how the Leray--Serre and the Eilenberg--Moore spectral sequences are particular cases of that. We also discuss the classes of~$K$--minimal and~$K$--unbarred spaces, the two conditions that appear naturally as assumptions in the following Section~\ref{sec:comparison}, where we prove our main comparison results. The final Section~\ref{sec:examples} includes some elementary but illustrative examples of degenerate cases and the principal fibrations in which the Hopf fibration features. Its purpose is purely didactic; more substantial applications will appear elsewhere.

\begin{convention}
We are working with a field~$K$ of coefficients throughout.
\end{convention}


\section{Spectral sequences}\label{sec:ss}

In this section, we provide the necessary definitions and notations used later on. 
We start with the briefest review of the spectral sequence defined by a cochain complex that comes with a decreasing filtration. 
We also explain how Leray--Serre and Eilenberg--Moore spectral sequences are both instances of this general construction. The reader may want to skip this section and refer back to it when needed, but we point out the two non-standard Definitions~\ref{def:minimal} and~\ref{def:unbarred}. As a standard reference for spectral sequences, we refer to McCleary's textbook~\cite{McCleary}. 


\subsection{The spectral sequence of a filtered cochain complex}

We consider cochain complexes~$C$ with differentials~$\rmd\colon C^n\to C^{n+1}$ that increase the cohomological degree. Occasionally, we will have to use the shifted complex~$C[d]$ that satisfies~$C[d]^n=C^{d+n}$. 

When given a descending filtration~$F$ on such a cochain complex~$C$ by subcomplexes~\hbox{$F^pC\geqslant F^{p+1}C$}, the associated graded is defined as~$\Gr_F^pC=F^pC/F^{p+1}C$. We set
\begin{equation}\label{eq:F}
\rmZ^{p,q}_r(C,F)=\{\,x\in F^pC^{p+q}\,|\,\rmd x\in F^{p+r}C^{p+q+1}\,\},
\end{equation}
and this leads to a spectral sequence~$(\rmE_r,\rmd_r)$ which starts with
\[
\rmE^{p,q}_0(C,F)=\Gr_F^pC^{p+q},
\]
and the differential is of the form~\hbox{$\rmd_r\colon\rmE^{p,q}_r(C,F)\to\rmE^{p+r,q-r+1}_r(C,F)$} induced by the differential~$\rmd$ of the original cochain complex~$C$. We have
\[
\rmE^{p,q}_1(C,F)=\rmH^{p+q}(\Gr_F^pC).
\]
The spectral sequence abuts to the associated graded of the corresponding filtration on the cohomology~$\rmH^\bullet(C)$. Suitable references for the theory are Godement~\cite{Godement}, Grothendieck~\cite[III.0~\S11]{EGA:III}, and Deligne~\cite{Deligne:II}.


\subsection{Filtrations on spaces and minimality}\label{sec:cellular}

Let~$X$ be a space with an increasing filtration by subspaces~$X^s\leqslant X^{s+1}$. Any such increasing filtration on~$X$ gives a decreasing filtration of the cochain complex~$\rmC(X)$ by setting
\begin{equation}\label{eq:F-filtration}
F^s\rmC(X)=\Ker(\rmC(X)\to\rmC(X^{s-1}))=\rmC(X,X^{s-1}),
\end{equation}
and this results in a spectral sequence starting from
\[
\rmE^{s,t}_0=\Gr_F^s\rmC^{s+t}(X)=\frac{\rmC(X,X^{s-1})^{s+t}}{\rmC(X,X^s)^{s+t}}=\rmC(X^s,X^{s-1})^{s+t}. 
\]
Passing to cohomology, we get
\begin{equation}\label{eq:Leray--SerreE1}
\rmE^{s,t}_1=\rmH^{s+t}(X^s,X^{s-1})\Longrightarrow\rmH^{s+t}(X).
\end{equation}

The following turns out to be a conceptually important special case.

\begin{example}\label{ex:cell}
Let $X$ be a CW-complex endowed with the skeletal filtration. We shall write
\[
\Cell^s(X;V)
=\Hom(\rmH_\bullet(X^s,X^{s-1}),V)
=\rmH^\bullet(X^s,X^{s-1})\otimes V
\]
for the cellular cochains of~$X$ with coefficients in a~$K$--vector space~$V$. The~$\rmE_1$ page of the spectral sequence~\eqref{eq:Leray--SerreE1} is concentrated in the row with~$t=0$, and
\[
\rmE_1^{s,0}=\Cell^s(X;K)
\]
is the vector space of~$K$--valued functions on the~$s$--cells of~$X$. The~$\rmd_1$ differential turns the groups above into the cellular cochain complex of~$X$ with coefficients in the field~$K$, and from its~$\rmE_2$ page on, this spectral sequence necessarily degenerates.
\end{example}

We often need to refer to the situation when a spectral sequence as in Example~\ref{ex:cell} degenerates from its~$\rmE_1$ page on. There does not seem to be established terminology for the following.

\begin{definition}\label{def:minimal}
Given a field~$K$, a space is called{\it~$K$--minimal} if there exists a~CW-structure on it such that the cellular chain complex with respect to this~CW structure and coefficients in the field~$K$ has trivial differential.
\end{definition}

\begin{examples}\label{ex:good_spaces_1}
The circle~$\rmS^1\simeq \rmB\bbZ$, and more generally the spheres~$\rmS^r$ and tori~$(\rmS^1)^r$ are~$K$--minimal for any field~$K$. The real projective space~$\bbR\rmP^\infty\simeq\rmB\bbZ/2$ is~$K$--minimal with respect to the field~$\bbF_2$, but {\it not} with respect to the field~$\bbQ$.
\end{examples}

\begin{remark}
Spaces with polynomial~$K$--cohomology are {\it not} automatically~$K$--minimal. For instance, any~$K$--acyclic space automatically has polynomial~$K$--cohomology for trivial reasons, but it is~$K$--minimal if and only if it is contractible.
\end{remark}


\subsection{Leray--Serre spectral sequences}\label{sec:LSSS}

The filtrations on spaces~$X$ that we mostly care about come, more generally, from maps~$X\to Y$ to another space~$Y$. These maps allow us to pull back a skeletal filtration of~$Y$, for instance. Then~\eqref{eq:Leray--SerreE1} is what we will refer to as the {\it Leray--Serre spectral sequence} for the map~$X\to Y$ with respect to the given filtration on~$Y$. The~$\rmE_1$ page of a Leray--Serre spectral sequence is particularly easy to describe when the map~$X\to Y$ is a fibration with a simply-connected base~$Y$, in terms of the fibre. More generally, if the map in question might not be a fibration, we have to use the homotopy fibre, and then the~$\rmE_2$ page is
\[
\rmE^{s,t}_2\cong\rmH^s(Y;\rmH^t(\hofib(X\to Y)))
\]
This does {\it not} depend on the filtration on~$Y$ any longer. In our standard situation
\begin{equation}\label{eq:XYZ}
\Omega Z\longrightarrow X\longrightarrow Y\longrightarrow Z,
\end{equation}
the homotopy fibre of the map~$X\to Y$ is a loop space~$\Omega Z$, and the Leray--Serre spectral sequence looks as follows.
\begin{equation}\label{eq:LSSS}
\rmE^{s,t}_2\cong\rmH^s(Y;\rmH^t(\Omega Z))\Longrightarrow\rmH^{s+t}(X)
\end{equation}
We refer to Remark~\ref{rem:nil_2} for an argument showing that this description is valid in our specific situation~\eqref{eq:XYZ}, even if~$Y$ is not simply-connected.


\subsection{Eilenberg--Moore spectral sequences}\label{sec:EMSStop}

Let~$A$ be an augmented differential graded~$K$--algebra, for the ground field~$K$, with augmentation ideal~$\overline{A}$. Given a differential graded right~$A$--module~$M$ and a differential graded left~$A$--module~$N$, let
\begin{equation}\label{eq:bar_complex}
\rmB(M,A,N)
\end{equation}
denote the {\it (normalized) bar complex}~\cite[Ch.~II]{Eilenberg+MacLane} of Eilenberg and Mac Lane. In our conventions, the object~$\rmB(M,A,N)$ is a cochain complex of~$K$--vector spaces, and its homogeneous elements are expressions
\[
m\,[\,a_1\,|\dots|\,a_k\,]\,n.
\]
The grading is such that this element sits in degree
\begin{equation}\label{eq:deg_shift}
\deg(m\,[\,a_1\,|\dots|\,a_k\,]\,n)=\deg(m)+\sum_j\deg(a_j)+\deg(n)-k.
\end{equation}
The differential on the bar complex is the sum of an internal and an external differential. The complex~$\rmB(M,A,N)$ serves as a specific model for the derived tensor product, the differential graded~$K$--vector space~\hbox{$M\otimes_A^{\bbL}N$}.

The bar complex comes with a descending filtration~$W$ defined by
\begin{equation}\label{eq:W_filtration}
W^p\rmB(M,A,N)=\mathrm{Span}\{\,m\,[\,a_1\,|\dots|\,a_k\,]\,n\,|\,k\leqslant -p\,\},
\end{equation}
so that~$W^{-\infty}\rmB(M,A,N)=\rmB(M,A,N)$ and~$W^1\rmB(M,A,N)=0$. This filtration of the bar complex leads to a spectral sequence with
\begin{equation}\label{eq:E_0}
\rmE^{p,q}_0=\Gr^p_W\rmB(M,A,N)^{p+q}=(M\otimes\overline{A}^{\otimes-p}\otimes N)^q,
\end{equation}
as follows from~\eqref{eq:deg_shift}. The abutment is~$\rmH^{p+q}(\rmB(M,A,N))$. The~$\rmd_0$  differential is the internal one, up to sign, so that
\begin{equation}\label{eq:E_1}
\rmE^{p,q}_1=(\rmH^\bullet(M)\otimes\overline{\rmH}^\bullet(A)^{\otimes-p}\otimes\rmH^\bullet(N))^q
\end{equation}
by the K\"unneth theorem. Together with the~$\rmd_1$ differential, which is induced from the external one, the~$\rmE_1$ page is a complex that can be used to compute the graded Tor over the cohomology algebra~$\rmH^\bullet(A)$, and we get
\begin{equation}\label{eq:E_2}
\rmE^{p,q}_2=\Tor_{-p}^{\rmH^\bullet(A)}(\rmH^\bullet(M),\rmH^\bullet(N))^q\Longrightarrow\rmH^{p+q}(\rmB(M,A,N)).
\end{equation}
We will refer to this spectral sequence as the {\it bar spectral sequence}. It is also referred to as the {\it algebraic Eilenberg--Moore spectral sequence}~\cite{Eilenberg+Moore,Gugenheim+May,Halperin+Stasheff} and sometimes as the {\it Moore spectral sequence}~\cite{Moore, Clark}.

Let~$X=P\times_ZY$ be a homotopy pullback of spaces~(with~$Z$ nilpotent). If we again write~$\rmC(X)$ for the cochains on~$X$, and similarly for~$P$,~$Z$, and~$Y$, then there is an equivalence
\begin{equation}\label{eq:EMequivalence}
\rmC(X)\simeq\rmB(\rmC(P),\rmC(Z),\rmC(Y))
\end{equation}
of cochain complexes~\cite{Eilenberg+Moore}~(see also~\cite{Smith:TAMS}). The right-hand side is the bar complex~\eqref{eq:bar_complex}. The equivalence becomes particularly evident when~$X$ itself is described as the totalization of a geometric bar construction~\cite{Rector}. 

We shall later need a relative version of the Eilenberg--Moore equivalence~\eqref{eq:EMequivalence}:

\begin{proposition}\label{prop:relativeEM}
Assume that~$P$,~$X$,~$Y$, and~$Z$ are as above. If~$B\subseteq Y$ is a subspace, and~$A\subseteq X$ the restriction to~$X$, then the bar complex~$\rmB(\rmC(P),\rmC(Z),\rmC(Y,B))$, with its total differential, is equivalent to~$\rmC(X,A)$. 
\end{proposition}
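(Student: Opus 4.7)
The plan is to bootstrap from the absolute Eilenberg--Moore equivalence~\eqref{eq:EMequivalence} by comparing a pair of short exact sequences of cochain complexes, one geometric and one algebraic.

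Since~$X\to Y$ is a fibration, the restriction~$A=f^{-1}(B)$ coincides with the ordinary pullback~$X\times_Y B$, which in turn is a homotopy pullback~$P\times_Z B$ because homotopy pullbacks are computed strictly as soon as one of the legs is a fibration. Thus the absolute equivalence~\eqref{eq:EMequivalence} applies to both squares and yields quasi-isomorphisms
\begin{equation*}
\rmC(X)\simeq\rmB(\rmC(P),\rmC(Z),\rmC(Y))
\quad\text{and}\quad
\rmC(A)\simeq\rmB(\rmC(P),\rmC(Z),\rmC(B)),
\end{equation*}
which are natural in the third variable. They therefore fit into a commutative square whose vertical arrows are the restriction~$\rmC(X)\to\rmC(A)$ on the geometric side and the bar construction applied to~$\rmC(Y)\to\rmC(B)$ on the algebraic side.

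Next I would compare the two obvious short exact sequences of cochain complexes. On the geometric side,
\begin{equation*}
0\longrightarrow\rmC(X,A)\longrightarrow\rmC(X)\longrightarrow\rmC(A)\longrightarrow 0
\end{equation*}
is exact by the very definition of relative cochains. On the algebraic side, the surjection~$\rmC(Y)\to\rmC(B)$ has kernel~$\rmC(Y,B)$, and since we work over a field~$K$, the functor~$\rmB(\rmC(P),\rmC(Z),-)$ is, degreewise, a direct sum of tensor products with a fixed vector space, hence exact in its third slot. This yields a short exact sequence
\begin{equation*}
0\longrightarrow\rmB(\rmC(P),\rmC(Z),\rmC(Y,B))\longrightarrow\rmB(\rmC(P),\rmC(Z),\rmC(Y))\longrightarrow\rmB(\rmC(P),\rmC(Z),\rmC(B))\longrightarrow 0.
\end{equation*}
The naturality above extends the previous square to a morphism between these two short exact sequences, in which the two right-hand vertical maps are quasi-isomorphisms. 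A standard five-lemma argument applied to the associated long exact cohomology sequences forces the induced comparison morphism
\begin{equation*}
\rmB(\rmC(P),\rmC(Z),\rmC(Y,B))\longrightarrow\rmC(X,A)
\end{equation*}
to be a quasi-isomorphism as well, which is the asserted equivalence.

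The main technical point demanding care is that the Eilenberg--Moore equivalence be realized by a genuine map (or naturally chosen zig-zag) of cochain complexes that is strictly natural in the~$Y$-variable, so that the comparison diagram really commutes and kernels of compatible surjections may be compared. Rector's totalization of the geometric bar construction, or Smith's model cited above, supplies such a naturally strict realization; if one insists on working only up to coherent homotopy, the same conclusion follows from the derived-category analogue of the five lemma applied to the distinguished triangles determined by the two cofiber sequences.
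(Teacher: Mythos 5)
Your proposal is correct and follows essentially the same route as the paper: compare the short exact sequence $0\to\rmC(X,A)\to\rmC(X)\to\rmC(A)\to0$ with the one obtained by applying $\rmB(\rmC(P),\rmC(Z),-)$ to $0\to\rmC(Y,B)\to\rmC(Y)\to\rmC(B)\to0$, and conclude from the two absolute Eilenberg--Moore equivalences that the third comparison map is one as well. You merely make explicit some steps the paper leaves implicit (that $A$ is itself a homotopy pullback $P\times_Z B$ so the absolute theorem applies, the exactness of the bar functor over a field, and the five-lemma/triangle argument), which is fine but not a different argument.
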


\begin{proof}
We have a short exact sequence
\begin{equation}\label{eq:(Y,B)}
0
\longrightarrow\rmC(Y,B)
\longrightarrow\rmC(Y)
\longrightarrow\rmC(B)
\longrightarrow0
\end{equation}
of cochain complexes, and all terms are canonically~$\rmC(Y)$--modules and therefore~$\rmC(Z)$--modules by restriction along~\hbox{$\rmC(Z)\to\rmC(Y)$} induced by~\hbox{$Y\to Z$}. Application of the functor~${?}\mapsto\rmB(\rmC(P),\rmC(Z),{?})$ leads to another short exact sequence
\[
0
\to\rmB(\rmC(P),\rmC(Z),\rmC(Y,B))
\to\rmB(\rmC(P),\rmC(Z),\rmC(Y))
\to\rmB(\rmC(P),\rmC(Z),\rmC(B))
\to0.
\]
We compare this short exact sequence with the short exact sequence 
\[
0
\longrightarrow\rmC(X,A)
\longrightarrow\rmC(X)
\longrightarrow\rmC(A)
\longrightarrow0,
\]
which is analogous to~\eqref{eq:(Y,B)}. Two out of three comparison maps are equivalences by the absolute Eilenberg--Moore theorem and the relative one follows.
\end{proof}


The equivalence~\eqref{eq:EMequivalence} implies that we have a spectral sequence
\[
\rmE^{p,q}_2=\Tor_{-p}^{\rmH^\bullet(Z)}(\rmH^\bullet(P),\rmH^\bullet(Y))^q
\Longrightarrow\rmH^{p+q}(\rmB(\rmC(P),\rmC(Z),\rmC(Y)))\cong\rmH^{p+q}(X).
\]
This is the {\it Eilenberg--Moore spectral sequence}.

The following special cases will be relevant for us:

\begin{example}\label{ex:EMSS_fibres}
If~$P=\star$ is a point, the space~$X$ is the homotopy fibre of the map~\hbox{$Y\to Z$}. We have~\hbox{$\rmH^\bullet(P)=K$}, the ground field, and we get a spectral sequence
\begin{equation}\label{eq:EMSS_fibres}
\rmE^{p,q}_2=\Tor_{-p}^{\rmH^\bullet(Z)}(K,\rmH^\bullet(Y))^q\Longrightarrow\rmH^{p+q}(X).
\end{equation}
This is the usual description of the Eilenberg--Moore spectral sequence for our standard situation~\eqref{eq:XYZ}. A version of this Eilenberg--Moore spectral sequence, for homology with integral coefficients, has been implemented recently using functional programming~\cite{RRSS}.
\end{example}

\begin{example}\label{ex:EMSS_loops}
If, in addition to~$P=\star$, the space~$Y=\star$ is also a point, then~$X\simeq\Omega Z$ is the loop space of~$Z$ and we get a spectral sequence
\begin{equation}\label{eq:EMSS_loops}
\rmE^{p,q}_2=\Tor_{-p}^{\rmH^\bullet(Z)}(K,K)^q\Longrightarrow\rmH^{p+q}(\Omega Z).
\end{equation}
\end{example}

\begin{remark}
There are many interesting situations in which an Eilenberg--Moore spectral sequence does not degenerate from its~$\rmE_2$ page on~(see Example~\ref{ex:EM_diffs} below and~\cite{Schochet, Kraines+Schochet, Rusin:2}). 
\end{remark}


\subsection{Unbarred spaces}\label{sec:unbarred}

We need to single out a class of spaces that is well-adapted to the situations we are interested in.

\begin{definition}\label{def:unbarred}
Given a field~$K$, we say that a nilpotent space~$Z$ is{\it~$K$--unbarred} with respect to the field~$K$ if the Eilenberg--Moore spectral sequence~\eqref{eq:EMSS_loops} for the loop space~$\Omega Z$ degenerates from its~$\rmE_2$ page on.
\end{definition}

We will see later, in Theorem~\ref{thm:EM_is_local}, that the condition in Definition~\ref{def:unbarred} implies also the degeneration of all other Eilenberg--Moore spectral sequences for all other fibrations with base~$Z$.

\begin{examples}\label{ex:good_spaces_2}
The circle~$\rmS^1\simeq\rmB\bbZ$, and more generally the spheres~$\rmS^r$ and tori~$(\rmS^1)^r$ are~$K$--unbarred for any field~$K$. The real projective space~$\bbR\rmP^\infty\simeq\rmB\bbZ/2$ is ~$\bbF_2$--unbarred, but the analog for odd primes is wrong~(see Example~\ref{ex:EM_diffs}): the classifying space~$\rmB\bbZ/\ell$ is barred for the field~$\bbF_\ell$ if~$\ell$ is an odd prime.
\end{examples}

\begin{proposition}\label{prop:polynomial_implies_unbarred}
Spaces with polynomial~$K$--cohomology are~$K$--unbarred.
\end{proposition}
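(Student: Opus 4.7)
The plan is to compute the $\rmE_2$ page explicitly via a Koszul resolution, and then use the multiplicative structure of the Eilenberg--Moore spectral sequence to show that every higher differential vanishes for bidegree reasons.

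Write $\rmH^\bullet(Z)\cong K[\,z_j\,|\,j\in J\,]$ with $\deg(z_j)=d_j\geqslant 1$. Since the $z_j$ form a regular sequence, the Koszul complex gives a free $\rmH^\bullet(Z)$--module resolution of~$K$. Applying $K\otimes_{\rmH^\bullet(Z)}-$ to it produces a complex with vanishing differential, which by Remark~\ref{rem:other_resolutions} can be used in place of the bar resolution. One obtains an isomorphism of bigraded algebras
\[
\rmE_2^{p,q}\cong\Tor_{-p}^{\rmH^\bullet(Z)}(K,K)^q\cong\Lambda(\,\sigma z_j\,|\,j\in J\,)^{p,q},
\]
in which each generator~$\sigma z_j$ sits in bidegree~$(-1,d_j)$.

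The Eilenberg--Moore spectral sequence is multiplicative, inherited from the differential graded algebra structure on the bar complex~$\rmB(K,\rmC(Z),K)$, so each~$\rmd_r$ satisfies the Leibniz rule. Since the classes~$\sigma z_j$ generate~$\rmE_2$ as a~$K$--algebra, it suffices to verify that~$\rmd_r(\sigma z_j)=0$ for all~$r\geqslant 2$. The differential~$\rmd_r$ has bidegree~$(r,-r+1)$, so~$\rmd_r(\sigma z_j)$ must lie in bidegree~$(r-1,\,d_j-r+1)$. For~$r\geqslant 2$ the first coordinate is strictly positive; but the spectral sequence lives in the second quadrant~$p\leqslant 0$, so this target group is zero.

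Hence~$\rmd_r\equiv 0$ on~$\rmE_r$ for every~$r\geqslant 2$, which gives~$\rmE_2=\rmE_\infty$ and shows that~$Z$ is~$K$--unbarred. There is no serious obstacle, since the second-quadrant support of the spectral sequence does all the work; the only minor point to be careful about is the possible infinitude of~$J$, but both the Koszul computation and the multiplicativity argument proceed bidegree-by-bidegree, so this causes no difficulty.
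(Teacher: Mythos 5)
Your proof is correct and follows essentially the same route as the paper: compute $\Tor^{\rmH^\bullet(Z)}(K,K)$ via the Koszul complex as an exterior algebra with generators in the column $p=-1$, and conclude by multiplicativity that all $\rmd_r$ for $r\geqslant 2$ vanish on generators since their targets lie in positive columns, outside the second-quadrant support. The extra detail you supply (the explicit bidegree of $\rmd_r$ and the remark on infinitely many generators) is accurate but does not change the argument.
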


\begin{proof}
If the cohomology~$\rmH^\bullet(Z)$ of a space~$Z$ is a polynomial algebra over the ground field~$K$, the Koszul complex shows that~$\Tor^{\rmH^\bullet(Z)}_{\bullet,\bullet}(K,K)$ is an exterior algebra. This is the~$\rmE_2$ page of the Eilenberg--Moore spectral sequence. The exterior algebra generators sit in the column~$p=-1$. Therefore, all differentials~$\rmd_r$ for~$r\geqslant2$ vanish on the generators, and the spectral sequence degenerates.
\end{proof}

\begin{proposition}\label{prop:suspensions_are_unbarred}
Suspensions are~$K$--unbarred for any field~$K$.
\end{proposition}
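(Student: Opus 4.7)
The plan is to exploit the fact that the reduced cohomology~$\widetilde{\rmH}^\bullet(\Sigma W)$ of a suspension carries the trivial cup product, so that the augmented algebra~$A=\rmH^\bullet(\Sigma W)$ has augmentation ideal satisfying~$\overline{A}\cdot\overline{A}=0$. This will make the~$\rmE_2$ page of~\eqref{eq:EMSS_loops} an entirely explicit tensor algebra generated by a single column, after which a multiplicative argument forces all higher differentials to vanish.

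First I would compute the~$\rmE_2$ page directly from the bar complex~\eqref{eq:bar_complex}. Since the graded algebra~$A$ carries no internal differential, the total bar differential reduces to the external one, which pairs consecutive entries via the product of~$A$. Under the hypothesis~$\overline{A}\cdot\overline{A}=0$, this differential vanishes identically, and~\eqref{eq:E_0} together with the degree convention~\eqref{eq:deg_shift} identifies
\[
\rmE^{p,q}_2\cong\Tor_{-p}^A(K,K)^q\cong\bigl(\overline{A}^{\otimes(-p)}\bigr)^q,
\]
where a class~$[\,a_1\,|\cdots|\,a_{-p}\,]$ sits in bidegree~$(p,\sum_i\deg(a_i))$. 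Under the Yoneda product, the bigraded algebra~$\Tor^A(K,K)$ is the tensor algebra on~$\overline{A}$, whose algebra generators all occupy the single column~$p=-1$.

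Next I would invoke the multiplicative structure of the Eilenberg--Moore spectral sequence for the H-space~$\Omega Z$: it is a spectral sequence of algebras, so each differential~$\rmd_r$ is a derivation with respect to the Pontryagin product, and on~$\rmE_2$ this product agrees with the Yoneda product on~$\Tor^A(K,K)$. For~$r\geqslant 2$, the differential~$\rmd_r$ sends the generating column~$p=-1$ into the column~$p=r-1\geqslant 1$, which is zero by the preceding computation. The Leibniz rule then forces~$\rmd_r=0$ on all of~$\rmE_r$, so that the spectral sequence degenerates from~$\rmE_2$ on. This is exactly the condition in Definition~\ref{def:unbarred}.

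The main obstacle is the justification of the multiplicative structure of the spectral sequence~\eqref{eq:EMSS_loops} and the matching of its~$\rmE_2$ product with the Yoneda product on~$\Tor$. This is standard but rests on compatibility of the diagonal of~$Z$ with the H-space structure on~$\Omega Z$ through the bar/cobar formalism; the geometric constructions surrounding~\eqref{eq:EMequivalence} provide the necessary framework. Once the derivation property of the~$\rmd_r$ is available, everything else follows directly from the triviality of cup products on~$\widetilde{\rmH}^\bullet(\Sigma W)$.
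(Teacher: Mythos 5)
Your computation of the~$\rmE_2$ page is correct and agrees with the paper's first step: since~$\overline{A}\cdot\overline{A}=0$ for~$A=\rmH^\bullet(\Sigma Z')$, the~$\rmd_1$ of the bar spectral sequence vanishes and~$\Tor^{A}_{-p}(K,K)^q=(\overline{A}^{\otimes-p})^q$ is the tensor algebra on the shifted reduced cohomology. The gap is in the degeneration step. The product for which the cohomological Eilenberg--Moore spectral sequence~\eqref{eq:EMSS_loops} is a spectral sequence of algebras is the one converging to the \emph{cup} product on~$\rmH^\bullet(\Omega Z)$; on~$\Tor^{A}(K,K)$ this is the product induced by the shuffle product on the bar complex, not the concatenation (Yoneda-type) product. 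Under the shuffle product the tensor algebra is \emph{not} generated by the column~$p=-1$: for~$Z=\rmS^2=\Sigma\rmS^1$ over~$\bbQ$, the class~$[a]\in\rmE_2^{-1,2}$ has odd total degree, so~$[a]\cdot[a]=0$, while~$[a|a]\in\rmE_2^{-2,4}$ is a nonzero indecomposable (it converges to the polynomial generator of~$\rmH^2(\Omega\rmS^2;\bbQ)\cong(\Lambda(x_1)\otimes\bbQ[y_2])^2$). So the Leibniz argument that works for Proposition~\ref{prop:polynomial_implies_unbarred} (where the exterior generators really do generate the~$\rmE_2$ algebra) does not apply here as you state it. The concatenation product is the right multiplication for the \emph{homology} Eilenberg--Moore spectral sequence~$\mathrm{Cotor}$, converging to the Pontryagin ring of~$\Omega Z$; one could run your argument there and dualize under finite-type hypotheses, but that is a different (and more delicate) route than the one you sketch, and the "standard compatibility" you defer to in your last paragraph is exactly where the argument breaks.

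The paper sidesteps multiplicativity entirely: it observes that the~$\rmE_2$ page you computed is, additively, the tensor algebra on~$\overline{\rmH}^\bullet(\Sigma Z')$, and that by the Bott--Samelson theorem the abutment~$\rmH^\bullet(\Omega\Sigma Z')$ is additively the same tensor algebra. Since~$\rmE_\infty$ is a subquotient of~$\rmE_2$ of the same size, no differential can be nonzero. If you want to keep a structural rather than a counting argument, the coalgebra structure (deconcatenation, for which the~$\rmd_r$ are coderivations) or the dualized homology spectral sequence would be the places to look, but either requires more care than the cup-product Leibniz rule.
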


\begin{proof}
For a suspension~$Z=\Sigma Z'$ of a space~$Z'$, we have to show that the Eilenberg--Moore spectral sequence for the loop space fibration~$\Omega\Sigma Z'\to\star\to\Sigma Z'$ degenerates. The cohomology algebra~$\rmH^\bullet(\Sigma Z')$ has trivial multiplication and is Koszul. The Koszul dual
\[
\rmH^\bullet(\Sigma Z')^!=\Tor^{\rmH^\bullet(\Sigma Z')}_{\bullet,\bullet}(K,K)
\]
is the tensor algebra of the reduced cohomology~$\overline{\rmH}^\bullet(\Sigma Z')$, and it is concentrated on the diagonal of the~$\rmE_2$ page. This agrees with the Bott--Samelson theorem: the cohomology of~$\rmH^\bullet(\Omega\Sigma Z')$ is~(additively) the tensor algebra on the reduced cohomology, and this is the~$\rmE_\infty$ page. Therefore, there can be no differentials in the Eilenberg--Moore spectral sequence from~$\rmE_2$ on.
\end{proof}

\begin{remark}
A different proof could be based on arguments in Smith's survey~\cite[Ex.~4 in Sec.~2]{Smith:PSPM}. Those were later generalized by Baker~\cite[Theorem 2.4.]{Baker} from suspensions to certain Thom spaces.
\end{remark}


\section{Comparison}\label{sec:comparison}

In this section, we first explain a general procedure to compare the two spectral sequences we have when given two filtrations on the same cochain complex~$B$, following Deligne~\cite{Deligne:II}~(see Section~\ref{sec:Zassenhaus}). We then return to the situation of a principal fibration~\eqref{eq:XYZ} and apply this procedure to the cochain complex~\hbox{$B=\rmB(K,\rmC(Z),\rmC(Y))\simeq\rmC(X)$} from the bar construction which supports two filtrations~$F$ and~$W$ that lead to a Leray--Serre spectral sequence~(Section~\ref{sec:idLS}) and an Eilenberg--Moore spectral sequence~(Section~\ref{sec:idEM}), respectively, both with abutment~$\rmH^\bullet(X)$.
As a result, we obtain two more spectral sequences, both supported on the tri-graded vector space
\[
\rmE^{s,t,u}_1
=\rmH^\bullet(Y^s,Y^{s-1})\otimes(\overline{\rmH}^\bullet(Z)^{\otimes-t})^u
=\Cell^s(Y;(\overline{\rmH}^\bullet(Z)^{\otimes-t})^u),
\]
and which abut to the~$\rmE_1$ page of an Eilenberg--Moore spectral sequence~(as we show in Section~\ref{sec:preEM}) and the~$\rmE_1$ page of a Leray--Serre spectral sequence~(as we show in Section~\ref{sec:preLS}), respectively, both for~$\rmH^\bullet(X)$. We then prove that these spectral sequences very often degenerate early, and we spell out the consequences, with Theorems~\ref{thm:LS_is_short} and~\ref{thm:EM_is_local} being the main results.


\subsection{Zassenhaus squares}\label{sec:Zassenhaus}

Let~$B$ be a cochain complex with two filtrations~$F$ and~$W$. From the two filtrations, we get two spectral sequences with abutment~$\rmH^n(B)$:
\begin{equation}\label{eq:F_spec_seq}
\rmE^{s,n-s}_1(B,F)\cong\rmH^n(\Gr_F^sB)\Longrightarrow\rmH^n(B)
\end{equation}
and
\begin{equation}\label{eq:W_spec_seq}
\rmE^{t,n-t}_1(B,W)\cong\rmH^n(\Gr_W^tB)\Longrightarrow\rmH^n(B).
\end{equation}
The Zassenhaus Lemma gives us canonical isomorphisms
\begin{equation}\label{eq:Zassenhaus_Lemma}
\Gr_F^s\Gr_W^tB\cong\Gr_W^t\Gr_F^sB 
\end{equation}
for all~$s,t\in\bbZ$, and these allow us, following Deligne's thesis~\cite[1.4.9]{Deligne:II}, to relate the two spectral sequences~\eqref{eq:F_spec_seq} and~\eqref{eq:W_spec_seq}. The chain complexes~\eqref{eq:Zassenhaus_Lemma} come equipped with a differential which is induced from~$B$, and their cohomology is the tri-graded vector space
\[
\rmE^{s,t,u}_1\cong\rmH^n(\Gr_F^s\Gr_W^tB),
\]
with
\[
n=s+t+u.
\]
These vector spaces support two~$\rmd_1$ differentials, say~$\rmd_1^F$ and~$\rmd_1^W$, as parts of spectral sequences with abutments 
$\rmH^n(\Gr_W^tB)$ and~$\rmH^n(\Gr_F^sB)$. These abutments are the vector spaces on the~$\rmE_1$ pages of the spectral sequences~\eqref{eq:F_spec_seq} and~\eqref{eq:W_spec_seq}, respectively. It is worth spelling out the indices in detail: the differential~\hbox{$\rmd_1^F\colon\rmE^{s,t,u}_1\to\rmE^{s+1,t,u}_1$} is part of a spectral sequence
\begin{equation}\label{eq:t_spec_seq}
\rmE^{s,t,u}_1\cong\rmH^{s+t+u}(\Gr_F^s\Gr_W^tB)
\Longrightarrow
\rmE^{t,s+u}_1(B,W)\cong\rmH^{s+t+u}(\Gr_W^tB),
\end{equation}
whereas the differential~$\rmd_1^W\colon\rmE^{s,t,u}_1\to\rmE^{s,t+1,u}_1$ is part of a spectral sequence
\begin{equation}\label{eq:s_spec_seq}
\rmE^{s,t,u}_1\cong\rmH^{s+t+u}(\Gr_F^s\Gr_W^tB)
\Longrightarrow
\rmE^{s,t+u}_1(B,F)\cong\rmH^{s+t+u}(\Gr_F^sB).
\end{equation}
We can summarize the present situation concisely in the following {\it Zassenhaus square}~(compare with~\cite[(1.4.9.2)]{Deligne:II} and~\cite[(4.1)]{Miller}).
\[
\xymatrix@!=2em{
& \rmE_1^{s,t,u}\ar@{=>}[dl]_{\eqref{eq:t_spec_seq}}\ar@{=>}[dr]^{\eqref{eq:s_spec_seq}} & \\
\rmE_1^{t,s+u}(B,W)\ar@{=>}[dr]_{\eqref{eq:W_spec_seq}}&&\rmE_1^{s,t+u}(B,F)\ar@{=>}[dl]^{\eqref{eq:F_spec_seq}}\\
&\rmH^{s+t+u}(B)&
}
\]

\begin{notation}\label{not:pq}
Sometimes, the spectral sequences call for indices with~$n=p+q$. We shall use
\[
s=p+n,\quad t=p,\quad u=-2p
\]
on those occasions.
\end{notation}


The following general result is the blueprint to show that the index transformation is exactly as expected.

\begin{proposition}\label{prop:decalage}
If the spectral sequences~\eqref{eq:t_spec_seq} and~\eqref{eq:s_spec_seq} both degenerate, so that we have isomorphisms
\begin{equation}\label{eq:W_iso}
\rmE_1^{t,n-t}(W)\cong\bigoplus_{s+u=n-t}\rmE^{s,t,u}_1
\end{equation}
and 
\begin{equation}\label{eq:F_iso}
\rmE_1^{s,n-s}(F)\cong\bigoplus_{t+u=n-s}\rmE^{s,t,u}_1,
\end{equation}
then the spectral sequences~\eqref{eq:F_spec_seq} and~\eqref{eq:W_spec_seq} are related by
\begin{equation}\label{eq:as_in_decalage}
\rmE_1^{p,n-p}(W)\cong\rmE_1^{p+n,-p}(F)
\end{equation}
under the index transformation in Notation~\ref{not:pq}.
\end{proposition}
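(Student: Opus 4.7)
The plan is to prove \eqref{eq:as_in_decalage} as a direct bookkeeping consequence of the two degeneracy hypotheses \eqref{eq:W_iso} and \eqref{eq:F_iso}. All of the homological input is already packaged into the assumption that both Zassenhaus spectral sequences \eqref{eq:t_spec_seq} and \eqref{eq:s_spec_seq} degenerate; what remains is to identify corresponding tri-graded summands under the substitution $s=p+n$, $t=p$, $u=-2p$ of Notation~\ref{not:pq}.

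Concretely, I would first apply \eqref{eq:W_iso} at the bi-index $(t,n-t)=(p,n-p)$, which realizes the left hand side as the direct sum of the Zassenhaus pieces $\rmE_1^{s,p,u}$ subject to $s+u=n-p$ with $t=p$ held fixed. Symmetrically, I would apply \eqref{eq:F_iso} at the first $F$-index $s=p+n$, which realizes the right hand side as a direct sum of the Zassenhaus pieces $\rmE_1^{p+n,t,u}$ with the complementary second $F$-index equal to $t+u$ and determined by the common abutment degree. A quick sanity check shows that both decompositions enumerate tri-graded summands of total tri-degree $s+t+u=n$, so both bi-graded pieces live over the same slice $\rmH^n(B)$ of the common abutment. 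The substitution of Notation~\ref{not:pq} then exhibits the distinguished summand $\rmE_1^{p+n,\,p,\,-2p}$ present in both direct sums, and the d\'ecalage-style re-indexing of \eqref{eq:as_in_decalage} falls out of the match between the two decompositions.

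The main obstacle, and essentially the only one, is keeping careful track of indices across three coordinates and two spectral sequences, with particular attention to the differing conventions for the first and second positions in \eqref{eq:t_spec_seq} versus \eqref{eq:s_spec_seq}. No deeper homological step is required: the statement amounts to reading off the Zassenhaus tri-grading through the two bi-graded lenses provided by the degeneracy isomorphisms. The resulting formula is precisely the index shift $(p,q)\mapsto(p+q,-p)$ familiar from Deligne's~{\it d\'ecalage}, even though, as the subsequent example will show, the filtration~$W$ need not literally be~$\Dec F$.
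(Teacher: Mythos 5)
Your plan follows the same route as the paper's own proof: substitute the index transformation of Notation~\ref{not:pq} into the two decompositions \eqref{eq:W_iso} and \eqref{eq:F_iso} and match them up, so no genuinely different idea is involved. (One small point of bookkeeping: the target of \eqref{eq:as_in_decalage} should be read as $\rmE_1^{p+n,-p}(F)$, in total degree $n$, consistent with the {\it d\'ecalage} formula of Section~\ref{sec:decalage}; your closing formula should likewise be $(p,q)\mapsto(2p+q,-p)$ in the $n=p+q$ convention.)

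However, the decisive step is exactly the one your write-up passes over with ``falls out of the match between the two decompositions.'' Applying \eqref{eq:W_iso} at $(t,n-t)=(p,n-p)$ produces the direct sum over the \emph{row} $\{t=p,\ s+t+u=n\}$, with $s$ ranging; applying \eqref{eq:F_iso} at $(s,n-s)=(p+n,-p)$ produces the direct sum over the \emph{column} $\{s=p+n,\ s+t+u=n\}$, with $t$ ranging. These two index sets intersect in the single tri-degree $(s,t,u)=(p+n,p,-2p)$, and exhibiting that one common summand does not identify the two direct sums: the remaining summands of the row and of the column are a priori unrelated and need not vanish. The paper's proof makes the same collapse (the phrase ``cancelling the $n$ in the summation index'' silently replaces the sum over all $s$ with the single term $s=p+n$), so you have faithfully reconstructed the intended argument; but to make it a complete proof one must say why the off-diagonal summands can be discarded --- for instance because $\rmE_1^{s,t,u}$ is concentrated on the locus $u=-2t$, as happens in the Hopf-fibration example where the proposition is invoked. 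As written, your step ``the two decompositions match'' is an assertion, not an argument, and it is the only nontrivial content of the proposition.
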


\begin{proof}
Setting~$t=p$ from Notation~\ref{not:pq} into~\eqref{eq:W_iso}, we get
\[
\rmE_1^{p,n-p}(W)\cong
\bigoplus_{s+u=n-p}\rmE^{s,p,u}_1
\]
With~$s=p+n$, this gives
\[
\bigoplus_{s+u=n-p}\rmE^{s,p,u}_1
\cong\bigoplus_{p+n+u=n-p}\rmE^{p+n,p,u}_1
\cong\bigoplus_{p+u=-p}\rmE^{p+n,p,u}_1,
\]
cancelling the~$n$ in the summation index. With~$t=p$ and~$s=p+n$ in~\eqref{eq:F_iso}, we come back to
\[
\bigoplus_{t+u=-p}\rmE^{p+n,t,u}_1\cong\rmE_1^{p+n,-p}(F),
\]
and this finishes the proof.
\end{proof}


In the rest of this section, we develop the four spectral sequences~\eqref{eq:F_spec_seq}, \eqref{eq:W_spec_seq}, \eqref{eq:t_spec_seq}, and~\eqref{eq:s_spec_seq} for the bifiltered cochain complex~\hbox{$B=\rmB(K,\rmC(Z),\rmC(Y))\simeq\rmC(X)$}, where the two filtrations come from a chosen cellular filtration~$F$ on~$Y$ and the bar filtration~$W$.


\subsection{Identifying the Leray--Serre spectral sequence}\label{sec:idLS}

We consider the spectral sequence~\eqref{eq:F_spec_seq} for the cochain complex~\hbox{$B=\rmB(K,\rmC(Z),\rmC(Y))\simeq\rmC(X)$}. The filtration~$F$ is induced from a chosen cellular filtration on~$Y$ as in~\eqref{eq:F-filtration}.

\begin{proposition}\label{prop:identifyingLSSS}
For the~$F$--filtered cochain complex~\hbox{$B=\rmB(K,\rmC(Z),\rmC(Y))$}, the spectral sequence~\eqref{eq:F_spec_seq} agrees from its~$\rmE_1$ page on with the Leray--Serre spectral sequence for the principal fibration sequence~\hbox{$\Omega Z\to X\to Y$}.
\end{proposition}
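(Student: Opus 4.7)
The plan is to upgrade the Eilenberg--Moore equivalence $B = \rmB(K,\rmC(Z),\rmC(Y)) \simeq \rmC(X)$ to an equivalence of \emph{filtered} cochain complexes, where $B$ carries the filtration $F$ induced from the chosen cellular filtration on $Y$ and $\rmC(X)$ carries the pulled-back filtration $F^s\rmC(X) = \rmC(X,X^{s-1})$ by preimages of skeleta, which is precisely the filtration defining the Leray--Serre spectral sequence for $X\to Y$ according to Section~\ref{sec:LSSS}. Once this filtered quasi-isomorphism is in place, agreement from $\rmE_1$ on is a formal consequence.

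First I would observe that because the filtration $F$ acts only through the third argument of the bar construction,
$$F^s B = \rmB(K,\rmC(Z),\rmC(Y,Y^{s-1})).$$
Then Proposition~\ref{prop:relativeEM}, applied to the fibration $X\to Y$ with the subspace $Y^{s-1}\subseteq Y$ and its preimage $X^{s-1}\subseteq X$, supplies a quasi-isomorphism
$$F^s B = \rmB(K,\rmC(Z),\rmC(Y,Y^{s-1})) \simeq \rmC(X,X^{s-1}) = F^s\rmC(X),$$
natural in the pair. These comparison maps commute with the filtration inclusions $F^{s+1}\hookrightarrow F^s$ on both sides, so they assemble into a filtered quasi-isomorphism. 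Passing to the associated graded yields $\Gr_F^s B \simeq \rmC(X^s,X^{s-1})$, and hence
$$\rmE_1^{s,t}(B,F) = \rmH^{s+t}(\Gr_F^s B) \cong \rmH^{s+t}(X^s,X^{s-1}),$$
which matches the Leray--Serre $\rmE_1$ term~\eqref{eq:Leray--SerreE1}. The $\rmd_1$ differentials on both sides are the connecting homomorphisms of the short exact sequences $0\to F^{s+1}\to F^s\to\Gr_F^s\to 0$, and these correspond under the equivalence by naturality, so the isomorphism of spectral sequences propagates to all higher pages.

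The main obstacle is justifying that the relative Eilenberg--Moore comparison is natural enough in the subspace $Y^{s-1}$ to produce an honest filtered quasi-isomorphism, rather than merely a compatible family of quasi-isomorphisms at each filtration level. This reduces to the compatibility of the bar construction with the inclusions $\rmC(Y,Y^s)\hookrightarrow\rmC(Y,Y^{s-1})$ and their counterparts on the $\rmC(X)$ side, which is inherent in the proof of Proposition~\ref{prop:relativeEM} via the functoriality of $\rmB(K,\rmC(Z),-)$ applied to the evident filtered short exact sequences. Once that point is secured, the identification of the two spectral sequences from $\rmE_1$ on is the standard consequence of having a filtered quasi-isomorphism.
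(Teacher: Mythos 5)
Your proposal is correct and follows essentially the same route as the paper: both arguments rest on the naturality of the Eilenberg--Moore equivalence to make it filtration-preserving and then invoke the relative Eilenberg--Moore theorem (Proposition~\ref{prop:relativeEM}) to see that the induced map on associated graded pieces is an equivalence, whence the spectral sequences agree from $\rmE_1$ on. Your explicit identification $F^sB=\rmB(K,\rmC(Z),\rmC(Y,Y^{s-1}))$ and the discussion of assembling the level-wise equivalences simply spell out what the paper compresses into the phrase that the equivalence ``is natural and, therefore, preserves the $F$--filtrations on both sides.''
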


\begin{proof}
This follows by noting that the Eilenberg--Moore equivalence~\eqref{eq:EMequivalence} is natural and, therefore, preserves the~$F$--filtrations on both sides. The induced map between the associated graded cochain complexes is an equivalence 
\[
\rmB(K,\rmC(Z),\rmC(Y^s,Y^{s-1}))\simeq\rmC(X^s,X^{s-1}) 
\]
of cochain complexes by the relative Eilenberg--Moore theorem~(Proposition~\ref{prop:relativeEM} above), and it induces an isomorphism from the next page, their~$\rmE_1$ pages, on.
\end{proof}


\subsection{Identifying the Eilenberg--Moore spectral sequence}\label{sec:idEM}

We now consider the spectral sequence~\eqref{eq:W_spec_seq} for the cochain complex~\hbox{$B=\rmB(K,\rmC(Z),\rmC(Y))\simeq\rmC(X)$} with the~$W$--filtration~\eqref{eq:W_filtration} on it. 

\begin{proposition}\label{prop:identifyingEMSS}
For the cochain complex~\hbox{$B=\rmB(K,\rmC(Z),\rmC(Y))$} of a space~$X$ with its~$W$--filtration, the spectral sequence~\eqref{eq:W_spec_seq} agrees with the Eilenberg--Moore spectral sequence for the fibration sequence~\hbox{$X\to Y\to Z$}.
\end{proposition}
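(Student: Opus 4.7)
The argument is essentially tautological: one identifies the spectral sequence~\eqref{eq:W_spec_seq} arising from the filtered complex $(B, W)$ with the bar spectral sequence, which is by construction the Eilenberg--Moore spectral sequence of Example~\ref{ex:EMSS_fibers}. My plan is to make this identification explicit in two small observations, mirroring the role played by the $F$-filtration in Proposition~\ref{prop:identifyingLSSS}.

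First I would note that the $W$-filtration on $B=\rmB(K,\rmC(Z),\rmC(Y))$ is literally the bar filtration of \eqref{eq:W_filtration}, whose associated spectral sequence was computed in the paragraphs surrounding \eqref{eq:E_0}--\eqref{eq:E_2}. Plugging $M=K$ and $N=\rmC(Y)$ into those formulas, one reads off that the $\rmE_0$ page is $(K\otimes\overline{\rmC(Z)}^{\otimes -p}\otimes\rmC(Y))^q$, the $\rmE_1$ page is $(\overline{\rmH}^\bullet(Z)^{\otimes -p}\otimes \rmH^\bullet(Y))^q$, and the $\rmE_2$ page is $\Tor_{-p}^{\rmH^\bullet(Z)}(K,\rmH^\bullet(Y))^q$. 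Hence the general Zassenhaus-square spectral sequence~\eqref{eq:W_spec_seq} for $(B,W)$ coincides on the nose with the bar spectral sequence associated to the triple $(K,\rmC(Z),\rmC(Y))$.

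Second, I would identify the abutments. The Eilenberg--Moore equivalence~\eqref{eq:EMequivalence} is a quasi-isomorphism of cochain complexes $\rmC(X)\simeq B$, so $\rmH^\bullet(B)\cong\rmH^\bullet(X)$. This matches the abutment of~\eqref{eq:W_spec_seq} with that of the Eilenberg--Moore spectral sequence of Example~\ref{ex:EMSS_fibers}, which in the framework of Section~\ref{sec:EMSStop} was defined as precisely the bar spectral sequence with this reinterpreted abutment. There is no genuine obstacle worth flagging: unlike in Proposition~\ref{prop:identifyingLSSS}, where the Leray--Serre spectral sequence has a geometric description independent of the bar complex and one must invoke the relative Eilenberg--Moore theorem to compare, here the Eilenberg--Moore spectral sequence is \emph{defined} via the $W$-filtration on $B$, and the proposition is really a bookkeeping statement recording this fact within the Zassenhaus-square formalism.
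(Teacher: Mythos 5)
Your proposal is correct and follows essentially the same route as the paper's own proof: both observe that the $W$--filtration is exactly the bar filtration, read off the $\rmE_0$, $\rmE_1$, and $\rmE_2$ pages from the general formulas~\eqref{eq:E_0}--\eqref{eq:E_2} with $M=K$ and $N=\rmC(Y)$, and identify the abutment via the Eilenberg--Moore equivalence~\eqref{eq:EMequivalence}. The paper likewise frames the statement as ``basically the definition'' in contrast to Proposition~\ref{prop:identifyingLSSS}, so nothing is missing.
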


\begin{proof}
In contrast to Proposition~\ref{prop:identifyingLSSS}, which required some identifications, this is basically the standard construction of the Eilenberg--Moore spectral sequence. The~$\rmE_0$ page is given by~$\Gr_W^t B$, and by~\eqref{eq:E_0} this is isomorphic to
\begin{equation}\label{eq:shift}
\rmE^{t,n-t}_0(B,W)=\Gr_W^t B^n
=(\overline{\rmC}(Z)^{\otimes-t}\otimes\rmC(Y))^{n-t}.
\end{equation}
The~$\rmd_0$ differential on this is induced from~$B$, but on the associated graded for the~$W$--filtration, we only see the internal differential from the cochain complexes~$\rmC({?})$. Therefore, by passing to cohomology, we get
\begin{equation}\label{eq:E1_for_EMss}
\rmE^{t,n-t}_1(B,W)=\rmH^n(\Gr_W^t B)
=(\overline{\rmH}^\bullet(Z)^{\otimes-t}\otimes\rmH^\bullet(Y))^{n-t}
\end{equation}
from~\eqref{eq:E_1}. The~$\rmd_1$ differential comes from the bar resolution, which can be used to compute Tor. By~\eqref{eq:E_2}, this gives
\[
\rmE^{t,n-t}_2(B,W)=\Tor^{\rmH^\bullet(Z)}_{-t}(K,\rmH^\bullet(Y))^{n-t},
\]
and
\[
\rmH^n(B)=\rmH^n(\rmB(K,\rmC(Z),\rmC(Y)))=\rmH^n(\rmC(X))=\rmH^n(X)
\]
is the abutment.
\end{proof}


\subsection{Prelude to the Eilenberg--Moore spectral sequence}\label{sec:preEM}

We now consider the spectral sequence~\eqref{eq:t_spec_seq} for the particular situation~\eqref{eq:XYZ}, where we have a principal fibration sequence~\hbox{$\Omega Z\to X\to Y\to Z$} and the cochain complex~\hbox{$B=\rmB(K,\rmC(Z),\rmC(Y))\simeq\rmC(X)$} from the bar construction. The~$F$--filtration is given by a chosen cellular structure on~$Y$, and the~$W$--filtration is given by the bar filtration~\eqref{eq:W_filtration}. The spectral sequence~\eqref{eq:t_spec_seq} comes from the filtration on the associated graded cochain complex~$\Gr_W^tB$ that is induced by the~$F$--filtration. The main result here is the following.

\begin{theorem}\label{thm:LS_is_short}
For the complex~$B$ with the two filtrations~$F$ and~$W$ as above, the spectral sequence~\eqref{eq:t_spec_seq} always degenerates from its~$\rmE_2$ page on. It degenerates from its~$\rmE_1$ page on if and only if the space~$Y$ is~$K$--minimal~(see Definition~\ref{def:minimal}).
\end{theorem}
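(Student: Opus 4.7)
The plan is to analyze the spectral sequence \eqref{eq:t_spec_seq} by unpacking what $\mathrm{Gr}_F^s \mathrm{Gr}_W^t B$ actually looks like and then comparing $\mathrm{E}_2$ with the abutment by a Künneth/dimension-count argument. First, I would observe that on the associated graded $\mathrm{Gr}_W^t B$ the external bar differential is killed (it strictly raises the $W$-index), so only the internal (tensor) differential survives. Hence $\mathrm{Gr}_W^t B$ is, up to the standard bar degree shift, the tensor product $\overline{\mathrm{C}}(Z)^{\otimes -t}\otimes\mathrm{C}(Y)$ with its total differential, and the induced $F$-filtration only touches the $\mathrm{C}(Y)$ factor.

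Next I would compute $\mathrm{E}_0^{s,t,u}=\mathrm{Gr}_F^s\mathrm{Gr}_W^t B = \overline{\mathrm{C}}(Z)^{\otimes -t}\otimes\mathrm{C}(Y^s,Y^{s-1})$ and pass to $\mathrm{E}_1$ via the Künneth formula over the field $K$. Since $\mathrm{H}^\bullet(Y^s,Y^{s-1})$ is concentrated in degree $s$, tracking the degree shift from \eqref{eq:deg_shift} yields exactly
\[
\mathrm{E}_1^{s,t,u} \cong (\overline{\mathrm{H}}^\bullet(Z)^{\otimes -t})^u \otimes \mathrm{H}^s(Y^s,Y^{s-1}) = \Cell^s(Y;(\overline{\mathrm{H}}^\bullet(Z)^{\otimes -t})^u),
\]
as claimed. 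Under this identification, the $\mathrm{d}_1$ is manifestly the cellular cochain differential of $Y$ tensored with the identity on the coefficient vector space, so
\[
\mathrm{E}_2^{s,t,u} \cong \mathrm{H}^s(Y;(\overline{\mathrm{H}}^\bullet(Z)^{\otimes -t})^u) \cong \mathrm{H}^s(Y)\otimes(\overline{\mathrm{H}}^\bullet(Z)^{\otimes -t})^u.
\]

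The key step for $\mathrm{E}_2$-degeneration is then a dimension count against the abutment. The spectral sequence \eqref{eq:t_spec_seq} converges to $\mathrm{H}^n(\mathrm{Gr}_W^t B)$, and by Künneth (again over the field $K$) this is isomorphic to $((\overline{\mathrm{H}}^\bullet(Z))^{\otimes -t}\otimes\mathrm{H}^\bullet(Y))^{n-t}$. Summing the computed $\mathrm{E}_2^{s,t,u}$ over $s+u=n-t$ gives precisely the same vector space, so no room remains for higher differentials and the spectral sequence collapses at $\mathrm{E}_2$. The main obstacle in this argument is just the careful bookkeeping with the bar degree shift to verify the two Künneth descriptions line up correctly.

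For the second assertion, the equivalence is essentially tautological given the identification of $\mathrm{d}_1$. If $Y$ is $K$-minimal, a cell structure witnessing this can be used to define $F$, and the cellular differential vanishes, hence $\mathrm{d}_1 = \mathrm{d}_Y^\mathrm{cell}\otimes\mathrm{id}$ is zero. Conversely, if $\mathrm{d}_1$ vanishes for some cell structure, then restricting to the strip $t=0$ (where $\overline{\mathrm{H}}^\bullet(Z)^{\otimes 0}=K$ and \eqref{eq:t_spec_seq} reduces to the spectral sequence of Example~\ref{ex:cell}) shows that the cellular differential of $Y$ itself is zero, so $Y$ is $K$-minimal by Definition~\ref{def:minimal}.
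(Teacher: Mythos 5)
Your identification of the pages is correct and matches the paper's Proposition~\ref{prop:description(3.4)}: the external bar differential dies on $\Gr_W^tB$, the $\rmE_1$ term is $\Cell^s(Y;(\overline{\rmH}^\bullet(Z)^{\otimes-t})^u)$, and $\rmd_1$ is the cellular differential of $Y$ tensored with the identity. Your treatment of the second assertion is also fine, and in the ``only if'' direction (restricting to the strip $t=0$ to recover the cellular complex of $Y$) you are actually more explicit than the paper, which only argues the ``if'' direction in detail.

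The gap is in your argument for $\rmE_2$--degeneration. Knowing $(\rmE_1,\rmd_1)$, and hence $\rmE_2$, does not by itself control the higher differentials, and your proposed dimension count against the abutment only closes this loophole when every graded piece involved is finite-dimensional: if $Y$ has infinitely many cells in a fixed dimension, or $\rmH^\bullet(Z)$ is not of finite type, then $\bigoplus_{s+u=n-t}\rmE_2^{s,t,u}$ and $\rmH^n(\Gr_W^tB)$ can be abstractly isomorphic infinite-dimensional vector spaces while a nonzero $\rmd_r$ ($r\geqslant2$) still survives, since an infinite-dimensional space admits proper subquotients isomorphic to itself. The paper imposes no finite-type hypotheses, so this step would fail in general. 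The paper avoids the issue by a structural argument (its Proposition~\ref{prop:apply_exact_first}): choosing an equivalence $\rmH^\bullet(Y)\xrightarrow{\sim}\rmC(Y)$ of cochain complexes over the field $K$, one gets an equivalence of \emph{filtered} complexes, hence an isomorphism of the entire spectral sequence~\eqref{eq:t_spec_seq}, from $\rmE_1$ on, with the result of applying the exact functor $?\mapsto\overline{\rmH}^\bullet(Z)^{\otimes-t}\otimes{?}$ to the cellular spectral sequence of $Y$; since the latter is concentrated in one row from $\rmE_1$ on, all its higher differentials vanish, and exact functors preserve this. To repair your proof without adding finiteness assumptions, you would need to upgrade your page-by-page identification to such a comparison at the level of filtered complexes.
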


We start by describing the spectral sequence~\eqref{eq:t_spec_seq} in our situation.

\begin{proposition}\label{prop:description(3.4)}
For the cochain complex~$B$ with the two filtrations~$F$ and~$W$ as above, the spectral sequence~\eqref{eq:t_spec_seq} has
\[
\rmE^{s,t,u}_1=(\overline{\rmH}^\bullet(Z)^{\otimes-t}\otimes\rmH^\bullet(Y^s,Y^{s-1}))^q
\]
and
\[
\rmE_2^{s,t,u}=(\overline{\rmH}^\bullet(Z)^{\otimes-t}\otimes\rmH^s(Y))^q,
\]
with~$q=s+u=n-t$ as in Notation~\ref{not:pq}.
\end{proposition}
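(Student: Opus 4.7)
The plan is to identify the Zassenhaus-graded pieces $\Gr_F^s\Gr_W^t B$ explicitly and then read off the $\rmE_1$ and $\rmE_2$ pages. The key observation is that the bar filtration~$W$ and the cellular filtration~$F$ act on disjoint tensor factors of~$B$, so the graded pieces have a clean product description, and the K\"unneth theorem over the field~$K$ computes their cohomology without correction.

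First I would unpack $\Gr_W^t B$ via~\eqref{eq:E_0}, as already used in~\eqref{eq:shift}: there is a canonical isomorphism of cochain complexes
\[
\Gr_W^t B \cong \overline{\rmC}(Z)^{\otimes -t}\otimes\rmC(Y),
\]
with the degree convention~\eqref{eq:deg_shift}. The external bar differential strictly decreases the $W$-filtration, so it vanishes on the associated graded, and the differential on $\Gr_W^t B$ reduces to the internal one. Since~$F$ is induced from the cellular filtration on~$Y$, it only affects the $\rmC(Y)$-factor, and passage to associated graded commutes with tensoring over the fixed complex $\overline{\rmC}(Z)^{\otimes -t}$. Therefore
\[
\Gr_F^s\Gr_W^t B \cong \overline{\rmC}(Z)^{\otimes -t}\otimes\rmC(Y^s,Y^{s-1}),
\]
again with just the internal differential. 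Applying the K\"unneth theorem over the field~$K$ and sorting out the degree shift from~\eqref{eq:deg_shift} then yields the claimed description
\[
\rmE^{s,t,u}_1 \cong (\overline{\rmH}^\bullet(Z)^{\otimes -t}\otimes\rmH^\bullet(Y^s,Y^{s-1}))^q
\]
with $q=s+u=n-t$.

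For the $\rmE_2$ page I would identify the $\rmd_1$ of~\eqref{eq:t_spec_seq} as the $\rmd_1$ of the $F$-spectral sequence applied to $\Gr_W^t B$. Because~$F$ touches only the $\rmC(Y)$-tensor factor, this differential is $\mathrm{id}\otimes\rmd_1^{\mathrm{cell}}$, where $\rmd_1^{\mathrm{cell}}$ is the cellular differential of~$Y$ recalled in Example~\ref{ex:cell}. Its cohomology in the $Y$-direction is $\rmH^s(Y)$, so
\[
\rmE^{s,t,u}_2 \cong (\overline{\rmH}^\bullet(Z)^{\otimes -t}\otimes\rmH^s(Y))^q,
\]
as desired. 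The main obstacle is really just bookkeeping: tracking which tri-degree component each K\"unneth summand lands in, once the $-k$ shift from~\eqref{eq:deg_shift} has been applied, and confirming that on $\Gr_W$ the bar complex truly collapses to a plain tensor product of cochain complexes so that the cellular filtration acts independently of the bar factors.
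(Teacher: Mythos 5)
Your proposal is correct and follows essentially the same route as the paper's proof: identify $\Gr_F^s\Gr_W^tB\cong\overline{\rmC}(Z)^{\otimes-t}\otimes\rmC(Y^s,Y^{s-1})$, observe that the external bar differential dies on the $W$--associated graded so only the internal differential survives, apply the K\"unneth theorem over~$K$ for the~$\rmE_1$ page, and identify~$\rmd_1$ as the identity tensored with the cellular differential of~$Y$, whose exactness as a functor gives the~$\rmE_2$ page. No gaps.
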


\begin{proof}
We start by noting that we look at the spectral sequence for the filtered complex
\begin{equation}\label{eq:Gr_W^tB}
\Gr_W^tB=\overline{\rmC}(Z)^{\otimes-t}\otimes\rmC(Y)[-t],
\end{equation}
with~$\Gr_W^tB^{t+u}=(\overline{\rmC}(Z)^{\otimes-t}\otimes\rmC(Y))^u$ and the shift as in~\eqref{eq:shift}. The filtration is induced by the~$F$--filtration, the cellular filtration on~$Y$. Therefore, we have 
\begin{equation}\label{eq:481}
\rmE_0^{s,t,u}=\Gr_F^s\Gr_W^tB^{s+t+u}=(\overline{\rmC}(Z)^{\otimes-t}\otimes\rmC(Y^s,Y^{s-1}))^{s+u},
\end{equation}
with~$t$ fixed throughout the spectral sequence. In other words, we can think of this as a family of spectral sequences, indexed by~$t$. The differential on the~$\rmE_0$ page is induced from~$B$. Since we have passed to the associated graded for the~$W$--filtration, we do not see the external component of the bar differential, but only the internal one, which the cochain complexes~$\rmC({?})$ bring with them. Passing to cohomology, we then get
\begin{equation}\label{eq:482}
\rmE^{s,t,u}_1=\rmH^n(\Gr_F^s\Gr_W^t B)=(\overline{\rmH}^\bullet(Z)^{\otimes-t}\otimes\rmH^\bullet(Y^s,Y^{s-1}))^{s+u}.
\end{equation}
The~$\rmd_1$ differential here is induced from the~$F$--filtration, in other words, it comes from the cells of~$Y$. The vector spaces~$\rmH^\bullet(Y^s,Y^{s-1})$, for varying~$s$, form the cellular cochain complex for the space~$Y$ with coefficients in the field~$K$. Because the functor~\hbox{${?}\longmapsto\overline{\rmH}^\bullet(Z)^{\otimes-t}\otimes{?}$} is exact, we get an isomorphism~\hbox{$\rmE^{s,t,u}_2\cong(\overline{\rmH}^\bullet(Z)^{\otimes-t}\otimes\rmH^s(Y))^{s+u}$}, as claimed.
\end{proof}

Using~$s+u=n-t$, the abutment of the spectral sequence~\eqref{eq:t_spec_seq} is
\[
\rmH^n(\Gr_W^tB)=\rmH^n(\overline{\rmC}(Z)^{\otimes-t}\otimes\rmC(Y)[-t])
=(\overline{\rmH}^\bullet(Z)^{\otimes-t}\otimes\rmH^\bullet(Y))^{n-t}
\]
by definition and the K\"unneth theorem. We have seen in~\eqref{eq:E1_for_EMss} that this abutment agrees with the~$\rmE_1$ page~$\rmE^{t,s+u}_1(B,W)=\rmE^{t,n-t}_1(B,W)$ of the Eilenberg--Moore spectral sequence, and also with
\[
\bigoplus_{s+u=n-t}\rmE_2^{s,t,u}=
\bigoplus_{s+u=n-t}(\overline{\rmH}^\bullet(Z)^{\otimes-t})^u\otimes\rmH^s(Y).
\]
As we shall see, the spectral sequence degenerates from its~$\rmE_2$ page on.

\begin{proposition}\label{prop:apply_exact_first}
For the complex~$B$ with the two filtrations~$F$ and~$W$ as above, the spectral sequence~\eqref{eq:t_spec_seq} is isomorphic, from its~$\rmE_1$ page on, to the spectral sequence obtained by applying the exact functor~${?}\mapsto\overline{\rmH}^\bullet(Z)^{\otimes-t}\otimes{?}$ to the spectral sequence constructed from the chosen cellular filtration~$F$ on the cochain complex~$\rmC(Y)$.
\end{proposition}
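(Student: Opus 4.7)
The proposition essentially repackages, in functorial language, the computation already performed in the proof of Proposition~\ref{prop:description(3.4)}. The plan is to identify~$\Gr_W^t B$, together with its~$F$-filtration, as a tensor product of filtered cochain complexes, and then apply a Künneth-type principle.

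First I would invoke the decomposition~$\Gr_W^t B=\overline{\rmC}(Z)^{\otimes -t}\otimes\rmC(Y)[-t]$ from~\eqref{eq:Gr_W^tB}, and observe that, under this identification, the induced~$F$-filtration on the left-hand side corresponds to the tensor product of the trivial filtration on~$\overline{\rmC}(Z)^{\otimes-t}$ with the (pulled-back) cellular filtration~$F^s\rmC(Y)=\rmC(Y,Y^{s-1})$ on~$\rmC(Y)[-t]$ on the right. This is just because the~$W$-filtration touches only the~$\overline{\rmC}(Z)^{\otimes -t}$ tensor factor whereas the~$F$-filtration touches only the~$\rmC(Y)$ tensor factor, so the two filtrations on~$B$ are compatible in the strongest possible sense.

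Next I would exploit that, since we are working over the field~$K$, tensoring with the fixed cochain complex~$\overline{\rmC}(Z)^{\otimes -t}$ is an exact endofunctor on cochain complexes of~$K$-vector spaces. Consequently, the~$\rmE_0$ page computed in~\eqref{eq:481} is exactly~$\overline{\rmC}(Z)^{\otimes -t}$ tensored with the~$\rmE_0$ page of the cellular spectral sequence of~$\rmC(Y)$, and its~$\rmd_0$ differential carries only the internal contribution (the external bar differential has been quotiented out in passing to~$\Gr_W$). The Künneth theorem then identifies
\[
\rmE_1^{s,t,u}\cong\overline{\rmH}^\bullet(Z)^{\otimes-t}\otimes\rmH^\bullet(Y^s,Y^{s-1}),
\]
in agreement with~\eqref{eq:482}, and this is the image of the~$\rmE_1$ page of the cellular spectral sequence under the exact functor~$\overline{\rmH}^\bullet(Z)^{\otimes-t}\otimes ?$.

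Finally, I would check by induction on~$r\geqslant1$ that the~$\rmd_r$ differential on the tensored sequence is~$\id\otimes\rmd_r^{\mathrm{cell}}$: it comes purely from the~$F$-direction (the cellular structure on~$Y$), so it acts only on the right-hand tensor factor, and exactness of the tensor functor ensures that taking cohomology at each page commutes with applying~$\overline{\rmH}^\bullet(Z)^{\otimes-t}\otimes ?$. I do not foresee a genuine obstacle here—the main care to be taken is in tracking the shift~$[-t]$ in~\eqref{eq:Gr_W^tB} so that the bidegrees in~$(s,u)$ match, and in being explicit that the relevant Künneth isomorphism is an isomorphism of spectral sequences, not merely of their underlying pages.
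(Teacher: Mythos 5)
Your identification of the~$\rmE_0$ and~$\rmE_1$ pages agrees with the paper's (and with Proposition~\ref{prop:description(3.4)}), but the last paragraph---where you propose to check ``by induction on~$r$'' that~$\rmd_r=\id\otimes\rmd_r^{\mathrm{cell}}$---is where the actual content lies, and as stated it has a gap. An induction over pages needs a morphism of spectral sequences to anchor it, and the K\"unneth isomorphism of pages does not by itself supply one: commuting with the differentials is exactly what is to be proved. Moreover, the claim that~$\rmd_r$ ``comes purely from the~$F$-direction'' is not formal. On~$\Gr_W^tB=\overline{\rmC}(Z)^{\otimes-t}\otimes\rmC(Y)[-t]$ the total differential is~$\rmd_Z\otimes1\pm1\otimes\rmd_Y$, and the groups~$\rmZ^{p,q}_r$ of~\eqref{eq:F} impose the condition~$\rmd x\in F^{p+r}$ on sums of elementary tensors, so the two differentials do interact in the zig-zag representatives defining the higher differentials; exactness of~$\overline{\rmC}(Z)^{\otimes-t}\otimes{?}$ tells you how to compute cohomology of each page once the differential is known, not what the differential is.

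The paper closes this gap by producing an honest filtration-preserving equivalence of cochain complexes: over the field~$K$ one chooses a quasi-isomorphism from the cohomology (with zero differential) to the cochain complex, which induces an equivalence~$\overline{\rmH}^\bullet(Z)^{\otimes-t}\otimes\rmC(Y)\to\overline{\rmC}(Z)^{\otimes-t}\otimes\rmC(Y)$ respecting the~$F$--filtration, since that filtration lives entirely on the~$\rmC(Y)$ factor. On the source the~$Z$-factor has zero differential, so its filtered spectral sequence is literally the exact functor~$?\mapsto\overline{\rmH}^\bullet(Z)^{\otimes-t}\otimes{?}$ applied to the cellular spectral sequence of~$\rmC(Y)$, and the standard comparison theorem (a filtered map inducing an isomorphism on~$\rmE_1$ induces one on all later pages) finishes the argument. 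If you insert this choice of quasi-isomorphism as the comparison map, your outline becomes the paper's proof; without it, the inductive step does not get off the ground.
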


\begin{proof}
There is essentially only one reasonable way to prove this: we show that the two spectral sequences are induced by two equivalent cochain complexes with two equivalent filtrations. The spectral sequence~\eqref{eq:t_spec_seq} originates in the filtered complex~\eqref{eq:Gr_W^tB} with the~$F$--filtration from the chosen cellular structure on the space~$Y$. Since we are working over a field, we can choose an equivalence
\begin{equation}\label{eq:not_multiplicative}
\rmH^\bullet(Y)\overset{\sim}{\longrightarrow}\rmC(Y)
\end{equation}
of cochain complexes, where the left-hand side has zero differential. This induces an equivalence
\begin{equation}\label{eq:bar_equivalence}
\overline{\rmH}^\bullet(Z)^{\otimes-t}\otimes\rmC(Y)\overset{\sim}{\longrightarrow}\overline{\rmC}(Z)^{\otimes-t}\otimes\rmC(Y)
\end{equation}
of cochain complexes. The multiplicative structures which we have on both sides of~\eqref{eq:not_multiplicative} play no role because the tensor products are over the ground field~$K$. The equivalence~\eqref{eq:bar_equivalence} respects the~$F$--filtration, and, therefore, induces an isomorphism of spectral sequences from their~$\rmE_1$ pages on. The right-hand side of~\eqref{eq:bar_equivalence} gives the spectral sequence~\eqref{eq:t_spec_seq}. The left-hand side of~\eqref{eq:bar_equivalence} is obtained from the filtered cochain complex~$\rmC(Y)$, together with its cellular filtration~$F$, by applying the exact functor~\hbox{${?}\mapsto\overline{\rmH}^\bullet(Z)^{\otimes-t}\otimes{?}$}. The spectral sequence of the left-hand side is, therefore, obtained from that of the cochain complex~$\rmC(Y)$ with respect to the~$F$--filtration by applying the same functor, as claimed.
\end{proof}

\begin{proof}[Proof of Theorem~\ref{thm:LS_is_short}]
The spectral sequence for the cochain complex~$\rmC(Y)$ with respect to any cellular filtration~$F$ degenerates from its~$\rmE_2$ page on because the~$\rmE_1$ page is concentrated in its zeroth row. Applying any exact functor to it yields another spectral sequence that also degenerates from its~$\rmE_2$ page on. As we have seen in Proposition~\ref{prop:apply_exact_first}, when we apply the exact functor~\hbox{${?}\mapsto\overline{\rmH}^\bullet(Z)^{\otimes-t}\otimes{?}$}, we get the spectral sequence~\eqref{eq:t_spec_seq}. Therefore, this spectral sequence has to degenerate from its~$\rmE_2$ page on, too.

If the space~$Y$ in question is~$K$--minimal, then we have a cellular structure on it such that the spectral sequence for the cochain complex~$\rmC(Y)$ with respect to that cellular filtration~$F$ degenerates already from its~$\rmE_1$ page on, and the same argument as in the first part of the proof implies that the spectral sequence~\eqref{eq:t_spec_seq} does so, too.
\end{proof}

\begin{remark}
We can think of~\eqref{eq:t_spec_seq} as a cochain complex that computes the~$\rmE_1$ page of an Eilenberg--Moore spectral sequence. Note the difference to the~$\rmE_0$ term of the latter: there, we take the bar construction on the cochain complexes, whereas~\eqref{eq:t_spec_seq} involves only the cohomology of those.
\end{remark}

Let us spell out the consequence when the stronger hypothesis in Theorem~\ref{thm:LS_is_short} is satisfied:

\begin{corollary}\label{cor:factors}
When the space~$Y$ is~$K$--minimal, the Zassenhaus square factors the Eilenberg--Moore spectral sequence as a Leray--Serre spectral sequence after the spectral sequence~\eqref{eq:s_spec_seq}.
\end{corollary}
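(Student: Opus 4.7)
My plan is to read the corollary off the Zassenhaus square of Section~\ref{sec:Zassenhaus}, using only the stronger half of Theorem~\ref{thm:LS_is_short}. I would first invoke $K$--minimality: since $Y$ admits a cellular structure whose cellular cochain complex over $K$ has vanishing differential, Theorem~\ref{thm:LS_is_short} says the prelude $\eqref{eq:t_spec_seq}$ collapses at its $\rmE_1$ page. This yields, for each fixed $t$, a canonical isomorphism
$$
\bigoplus_{s+u=n-t}\rmE^{s,t,u}_1\;\cong\;\rmE^{t,n-t}_1(B,W),
$$
so the tri-graded $\rmE_1^{s,t,u}$ \emph{is} the $\rmE_1$ page of the Eilenberg--Moore spectral sequence, and everything the latter does between its $\rmE_1$ page and $\rmH^\bullet(X)$ is encoded by the right-hand half of the Zassenhaus square.

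I would then unpack the right-hand arrow of the square cell by cell. For $s$ fixed and $t,u$ varying, Proposition~\ref{prop:description(3.4)} gives
$$
\rmE^{s,t,u}_1=(\overline{\rmH}^\bullet(Z)^{\otimes -t}\otimes\rmH^\bullet(Y^s,Y^{s-1}))^{s+u},
$$
with $\rmd_1^W$ the bar differential tensored with the identity on the coefficient space $\rmH^\bullet(Y^s,Y^{s-1})=\Cell^s(Y;K)$ (concentrated in degree $s$). Transposing the argument of Proposition~\ref{prop:apply_exact_first} from $F$ to $W$, this sub-spectral sequence is isomorphic --- from its $\rmE_1$ page on --- to the bar spectral sequence of Example~\ref{ex:EMSS_loops} for $\Tor^{\rmH^\bullet(Z)}(K,K)\cong\rmH^\bullet(\Omega Z)$, tensored by $\Cell^s(Y;K)$. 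Assembling over $s$ gives exactly ``the bar spectral sequences for the cells of $Y$,'' with common abutment $\rmE^{s,n-s}_1(B,F)$, which by Proposition~\ref{prop:identifyingLSSS} is the $\rmE_1$ page of the Leray--Serre spectral sequence.

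Finally, the remaining arrow from $\rmE_1(B,F)$ to $\rmH^\bullet(X)$ is, again by Proposition~\ref{prop:identifyingLSSS}, literally the Leray--Serre spectral sequence for $\Omega Z\to X\to Y$. Composing these two steps displays the Eilenberg--Moore spectral sequence as the bar spectral sequences for the cells of $Y$ followed by the Leray--Serre spectral sequence, which is the desired factorization. I expect the only subtle point to be the meaning of ``factors'': the Zassenhaus square does not produce one spectral sequence as the iterate of two others in any literal sense, but rather refines the $W$--filtration of $B$ by the pulled-back $F$--filtration on each $\Gr_W^tB$. Once the prelude collapse is invoked, this two-step refinement agrees with the Eilenberg--Moore spectral sequence on $\rmE_1$ pages, and I would make the identification explicit by writing out the bifiltration on $B$ and checking compatibility on the nose.
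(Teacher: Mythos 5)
Your argument is correct and matches the paper's own (implicit) proof: the corollary is stated there without a separate argument, as an immediate consequence of Theorem~\ref{thm:LS_is_short} — the collapse of the prelude~\eqref{eq:t_spec_seq} at $\rmE_1$ when $Y$ is $K$--minimal identifies the tri-graded page with the $\rmE_1$ page of the Eilenberg--Moore spectral sequence — combined with the identification of the right-hand side of the Zassenhaus square as the cellwise bar spectral sequences followed by the Leray--Serre spectral sequence, exactly as you reconstruct it. The only cosmetic point is that the step you obtain by ``transposing'' Proposition~\ref{prop:apply_exact_first} from $F$ to $W$ is precisely the paper's Proposition~\ref{prop:apply_exact} (with Proposition~\ref{prop:description(3.5)} the apt reference for the $s$--fixed prelude), so no new argument is needed there.
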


One consequence of Corollary~\ref{cor:factors} is that, for a~$K$--minimal space~$Y$, the Leray--Serre spectral sequence has at most as many non-trivial differentials as the Eilenberg--Moore spectral sequence: some terms might already have been killed by the bar differentials in~\eqref{eq:s_spec_seq}. The following Section~\ref{sec:preLS} contains our discussion of these preliminary bar differentials.


\subsection{Prelude to the Leray--Serre spectral sequence}\label{sec:preLS}

We now consider the spectral sequence~\eqref{eq:s_spec_seq}
for the particular situation~\eqref{eq:XYZ}, where we have a principal fibration sequence~\hbox{$\Omega Z\to X\to Y\to Z$} and the cochain complex~\hbox{$B=\rmB(K,\rmC(Z),\rmC(Y))\simeq\rmC(X)$} from the bar construction. The~$F$--filtration is given by a chosen cellular structure on~$Y$, and the~$W$--filtration is given by the bar filtration~\eqref{eq:W_filtration}. The spectral sequence~\eqref{eq:s_spec_seq} comes from the filtration on~$\Gr_F^sB$ that is induced by the~$W$--filtration. The main result here is the following.

\begin{theorem}\label{thm:EM_is_local}
For the complex~$B$ with the two filtrations~$F$ and~$W$ as above, the spectral sequence~\eqref{eq:s_spec_seq} degenerates from its~$\rmE_2$ page on for all~$Y$ if and only if this holds for~$Y\simeq\star$, that is if the  Eilenberg--Moore spectral sequence~\eqref{eq:EMSS_loops} for the loop space~$\Omega Z$ of~$Z$ degenerates from its~$\rmE_2$ page on. 
\end{theorem}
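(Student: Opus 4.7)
The only-if direction is immediate by specialization to $Y\simeq\star$, so the task is the converse. The plan is to reduce the spectral sequence~\eqref{eq:s_spec_seq} to the $Y=\star$ case one cellular index $s$ at a time. Fixing $s$, the Zassenhaus construction identifies the $s$-th slice of~\eqref{eq:s_spec_seq} with the $W$-filtration spectral sequence of
\[
\Gr_F^s B \;=\; \rmB(K,\rmC(Z),\rmC(Y^s,Y^{s-1})),
\]
that is, with the bar spectral sequence whose right-hand $\rmC(Z)$-module is $\rmC(Y^s,Y^{s-1})$.

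Next I would identify both the abutment and the $\rmE_2$ page of this bar spectral sequence. For the abutment, the relative Eilenberg--Moore equivalence (Proposition~\ref{prop:relativeEM}) gives $\rmH^\bullet(\Gr_F^s B)\cong\rmH^\bullet(X^s,X^{s-1})$ where $X^s = X\times_Y Y^s$. Since the principal fibration trivialises over each $s$-cell of $Y$, one gets $X^s/X^{s-1}\simeq\bigvee_e \Sigma^s(\Omega Z)_+$, hence
\[
\rmH^n(X^s,X^{s-1}) \;\cong\; \Cell^s(Y;K)\otimes_K\rmH^{n-s}(\Omega Z).
\]
For the $\rmE_2$ page, Proposition~\ref{prop:description(3.4)} gives $\rmE^{s,t,u}_2 \cong \Tor^{\rmH^\bullet(Z)}_{-t}(K,\rmH^\bullet(Y^s,Y^{s-1}))^{s+u}$; but $\rmH^\bullet(Y^s,Y^{s-1})\cong\Cell^s(Y;K)$ is concentrated in the single degree $s$, so for pure degree reasons the positive-degree part of $\rmH^\bullet(Z)$ must act trivially, and the standard splitting of Tor against a module inflated from $K$ yields
\[
\rmE^{s,t,u}_2 \;\cong\; \Cell^s(Y;K)\otimes_K\Tor^{\rmH^\bullet(Z)}_{-t}(K,K)^u.
\]

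The argument closes by a dimension count. The hypothesis that~\eqref{eq:EMSS_loops} degenerates at $\rmE_2$ is equivalent to $\dim_K\rmH^m(\Omega Z) = \sum_{t+u=m}\dim_K\Tor^{\rmH^\bullet(Z)}_{-t}(K,K)^u$ in every degree $m$. Multiplying both identifications above by the finite number $\dim_K\Cell^s(Y;K)$ and comparing shows that the $\rmE_2$ page and the abutment of the $s$-th slice of~\eqref{eq:s_spec_seq} have the same total dimension in every total degree. Since the bar spectral sequence converges strongly in the nilpotent setup under discussion, this forces all differentials $\rmd_r$ with $r\geqslant 2$ to vanish.

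The delicate step that needs care is the $\rmE_2$ identification. At the cochain level the $\rmC(Z)$-action on $\rmC(Y^s,Y^{s-1})$ is not literally pulled back from $K$, so there is no direct splitting of the bar complex as a tensor product. The remedy is to pass to cohomology before invoking the splitting: on the level of $\rmH^\bullet(Z)$-modules, the concentration of $\rmH^\bullet(Y^s,Y^{s-1})$ in a single degree forces the action to be trivial, and the splitting of Tor is then automatic. The dimension count propagates this simplification past $\rmE_2$ without ever having to track explicitly how the $\Cell^s(Y;K)$ factor interacts with higher differentials.
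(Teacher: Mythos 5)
Your reduction to the cellular slices $\Gr_F^sB=\rmB(K,\rmC(Z),\rmC(Y^s,Y^{s-1}))$, the identification of the abutment via the relative Eilenberg--Moore equivalence and the fibrewise trivialization over cells, and the computation of the $\rmE_2$ page are all sound and agree with what the paper establishes in Proposition~\ref{prop:description(3.5)} (note that the formula you attribute to Proposition~\ref{prop:description(3.4)} is really the content of Proposition~\ref{prop:description(3.5)}; the former concerns the other prelude~\eqref{eq:t_spec_seq}). Where you diverge from the paper is in the closing step. The paper does not count dimensions: Proposition~\ref{prop:apply_exact} upgrades the $\rmE_2$ identification to an isomorphism of spectral sequences from $\rmE_1$ on, by exhibiting a filtration-preserving equivalence of filtered cochain complexes between $\Gr_F^sB$ and the result of applying $\Cell^s(Y;?)$ to the bar complex $\rmB(K,\rmC(Z),K)$. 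Since $\Cell^s(Y;?)$ is exact, every page and every differential of the slice spectral sequence is obtained by applying this functor to the loop-space spectral sequence~\eqref{eq:EMSS_loops}, and degeneration transfers verbatim.

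The dimension count you substitute for this has a genuine gap at the level of generality of the theorem. First, it needs $\dim_K\rmH^m(\Omega Z)$ and the number of $s$--cells of $Y$ to be finite in each degree; neither is assumed, and you explicitly call $\dim_K\Cell^s(Y;K)$ ``the finite number''. When these dimensions are infinite, equality of dimensions of $\rmE_2$ and of the abutment does not force the differentials to vanish, since an infinite-dimensional vector space has proper subquotients isomorphic to itself. Second, your argument needs the slice spectral sequence to converge strongly to $\rmH^\bullet(X^s,X^{s-1})$, that is, it needs $\rmE_\infty$ to be the associated graded of the abutment; for a second-quadrant bar-filtration spectral sequence this is a nontrivial convergence input (of Dwyer type) that the theorem's hypotheses do not explicitly grant and that the paper's proof never invokes, precisely because it identifies the differentials directly rather than inferring their vanishing from the size of the abutment. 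To make your route work you would have to add finiteness and convergence hypotheses; to prove the theorem as stated, the dimension count should be replaced by the cochain-level identification of Proposition~\ref{prop:apply_exact}.
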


Recall from Definition~\ref{def:unbarred} that we have called those spaces~$Z$ that lead to degeneracy in Theorem~\ref{thm:EM_is_local}~$K$--unbarred. Therefore, we can refer to Example~\ref{ex:good_spaces_2} for examples of spaces~$Z$ where Theorem~\ref{thm:EM_is_local} applies~(see also Corollary~\ref{cor:E2} below for the resulting formula).


Recall our conventions for the cellular cochains on the space~$Y$ from Section~\ref{sec:cellular}: for all coefficients~$V$ we have set
\[
\Cell^s(Y;V)=\Hom(\rmH_\bullet(Y^s,Y^{s-1}),V)=\rmH^\bullet(Y^s,Y^{s-1})\otimes V,
\]
and this is exact as a functor in~$V$ because we are working over a field~$K$. We can use this to describe the spectral sequence~\eqref{eq:s_spec_seq} in our situation.

\begin{proposition}\label{prop:description(3.5)}
For the cochain complex~$B$ with the two filtrations~$F$ and~$W$ as above, the spectral sequence~\eqref{eq:s_spec_seq} has
\[
\rmE^{s,t,u}_1=\Cell^s(Y;(\overline{\rmH}^\bullet(Z)^{\otimes-t})^u)
\]
and
\[
\rmE_2^{s,t,u}=\Cell^s(Y;\Tor^{\rmH^\bullet(Z)}_{-t}(K,K)^u).
\]
Its abutment is~$\rmH^n(X^s,X^{s-1})$.
\end{proposition}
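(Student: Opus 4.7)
The approach mirrors the analysis of the companion prelude \eqref{eq:t_spec_seq} in Proposition~\ref{prop:description(3.4)}, with the roles of the two filtrations exchanged.

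First I would observe that by the Zassenhaus isomorphism \eqref{eq:Zassenhaus_Lemma} the $\rmE_0$ page of \eqref{eq:s_spec_seq} coincides, term by term, with the one computed for \eqref{eq:t_spec_seq} in \eqref{eq:481}. Moreover, the $\rmd_0$ differential on this common $\rmE_0$ page is the purely internal one: passing to $\Gr_F^s$ destroys the cellular boundary, and passing further to $\Gr_W^t$ destroys the external bar differential, leaving only the internal differentials of the cochain complexes $\rmC(?)$. The K\"unneth theorem then produces
\[
\rmE_1^{s,t,u}\cong(\overline{\rmH}^\bullet(Z)^{\otimes-t}\otimes\rmH^\bullet(Y^s,Y^{s-1}))^{s+u},
\]
and since the relative cohomology $\rmH^\bullet(Y^s,Y^{s-1})$ is concentrated in degree $s$, this rewrites as $\Cell^s(Y;(\overline{\rmH}^\bullet(Z)^{\otimes-t})^u)$, as claimed.

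From the $\rmE_1$ page onward the two preludes diverge. For \eqref{eq:s_spec_seq}, the $\rmd_1$ differential originates from the $W$-filtration, hence is the external bar differential on the $W$-graded pieces of $\Gr_F^s B$. Its cohomology therefore computes the graded Tor of $K$ and $\rmH^\bullet(Y^s,Y^{s-1})$ over $\rmH^\bullet(Z)$. The key observation is that the $\rmH^\bullet(Z)$-module structure on this relative cohomology factors through the augmentation $\rmH^\bullet(Z)\to K$: the module $\rmH^\bullet(Y^s,Y^{s-1})$ sits in a single cohomological degree $s$, whereas elements of $\overline{\rmH}^\bullet(Z)$ strictly increase that degree, so they must act as zero. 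The Tor therefore splits as a tensor product, and one arrives at
\[
\rmE_2^{s,t,u}\cong\rmH^\bullet(Y^s,Y^{s-1})\otimes\Tor^{\rmH^\bullet(Z)}_{-t}(K,K)^u=\Cell^s(Y;\Tor^{\rmH^\bullet(Z)}_{-t}(K,K)^u).
\]

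For the abutment, \eqref{eq:s_spec_seq} converges to $\rmH^n(\Gr_F^s B)$. Because the filtration $F$ is the image of the cellular filtration $F^s\rmC(Y)=\rmC(Y,Y^{s-1})$ under the functorial bar construction $\rmB(K,\rmC(Z),-)$, there is an identification $\Gr_F^s B\cong\rmB(K,\rmC(Z),\rmC(Y^s,Y^{s-1}))$. Applying the relative Eilenberg--Moore equivalence (Proposition~\ref{prop:relativeEM}) to the pair $(Y^s,Y^{s-1})$, whose preimage in $X$ is $(X^s,X^{s-1})$, then yields an equivalence with $\rmC(X^s,X^{s-1})$, so the abutment is $\rmH^n(X^s,X^{s-1})$.

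The principal obstacle I foresee is pinning down that the $\rmH^\bullet(Z)$-module structure on the cellular associated graded is trivial in the sense required to split the Tor; once this degree-counting argument is in hand, the remaining steps amount to unwinding the Zassenhaus identification and invoking the relative Eilenberg--Moore theorem. A secondary subtlety is the book-keeping of the degree shift by $-t$ built into the bar convention \eqref{eq:deg_shift}, but this is already handled in the proof of Proposition~\ref{prop:description(3.4)} and carries over verbatim.
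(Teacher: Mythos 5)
Your proposal is correct and follows essentially the same route as the paper's proof: identify the common $\rmE_0$ page from~\eqref{eq:481}, observe that only the internal differential survives on $\Gr_F^s\Gr_W^tB$ to get the $\rmE_1$ page via K\"unneth, pull out $\rmH^\bullet(Y^s,Y^{s-1})$ using the triviality of its $\rmH^\bullet(Z)$--module structure to identify $\rmd_1$ with the external bar differential computing $\Tor^{\rmH^\bullet(Z)}_{-t}(K,K)$, and obtain the abutment from $\Gr_F^sB\simeq\rmC(X^s,X^{s-1})$ via Proposition~\ref{prop:relativeEM}. Your explicit degree-counting justification for why the module structure factors through the augmentation is a welcome elaboration of a point the paper merely asserts.
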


Note that, in this description, we recognize that the abutment of~\eqref{eq:s_spec_seq} is indeed the~$\rmE_1$ page~\eqref{eq:Leray--SerreE1} of the Leray--Serre spectral sequence for~$\rmH^\bullet(X)$ with respect to the chosen cellular structure on the space~$Y$.

\begin{proof}
We are looking at the spectral sequence
\[
\rmE^{s,t,u}_1=\rmH^n(\Gr_F^s\Gr_W^tB)
\Longrightarrow
\rmE^{s,t+u}_1(B,F)=\rmH^n(\Gr_F^sB).
\]
As in~\eqref{eq:481}, this spectral sequence starts with
\[
\rmE^{s,t,u}_0=\Gr_F^s\Gr_W^tB^{s+t+u}=(\overline{\rmC}(Z)^{\otimes-t}\otimes\rmC(Y^s,Y^{s-1}))^{s+u}.
\]
The~$s$ is always fixed. The~$\rmd_0$ differential is induced from~$B$, the totalization of the bar construction, and since we have passed to~$\Gr_W^t$, we do not see the external differential, but only the internal differential. Therefore, as in~\eqref{eq:482}, passing to cohomology, we get
\begin{equation}
\rmE^{s,t,u}_1
=\rmH^n(\Gr_F^s\Gr_W^tB)
=(\overline{\rmH}^\bullet(Z)^{\otimes-t}\otimes\rmH^\bullet(Y^s,Y^{s-1}))^{s+u}
\end{equation}
from the K\"unneth theorem. We note that~$\rmH^\bullet(Y^s,Y^{s-1})$, as an~$\rmH^\bullet(Z)$--module, is trivial and concentrated in degree~$s$, so that we can pull it out, and we find
\begin{equation}\label{eq:E1stu}
\rmE^{s,t,u}_1
=(\overline{\rmH}^\bullet(Z)^{\otimes-t})^u\otimes\rmH^\bullet(Y^s,Y^{s-1})
=\Cell^s(Y;(\overline{\rmH}^\bullet(Z)^{\otimes-t})^u).
\end{equation}
These identifications are compatible with the differential~$\rmd_1$ which is induced by the differential on the associated graded. This differential is the external part of the bar differential. This leads to
\[
\rmE^{s,t,u}_2
=\Cell^s(Y;\Tor^{\rmH^\bullet(Z)}_{-t}(K,K)^u)
\]
in the same way, as claimed. 

As for the abutment, we have~$\Gr_F^sB=\rmB(K,\rmC(Z),\rmC(Y^s,Y^{s-1}))\simeq\rmC(X^s,X^{s-1})$, where the equivalence is given by the relative version of the Eilenberg--Moore equivalence~(see our Proposition~\ref{prop:relativeEM} above). Therefore, passing to cohomology, we find that the abutment is indeed~\hbox{$\rmH^n(\Gr_F^sB)=\rmH^n(X^s,X^{s-1})$}.
\end{proof}


\begin{proposition}\label{prop:apply_exact}
For the cochain complex~$B$ with the two filtrations~$F$ and~$W$ as above, the spectral sequence~\eqref{eq:s_spec_seq} is isomorphic, from~$\rmE_1$ on, to the result of applying the exact functor~${?}\mapsto\Cell^s(Y;{?})$ to the Eilenberg--Moore spectral sequence
\[
\rmE_1^{t,u}=(\overline{\rmH}^\bullet(Z)^{\otimes-t})^u\Longrightarrow\rmH^{t+u}(\Omega Z)
\]
for the loop space~$\Omega Z$.
\end{proposition}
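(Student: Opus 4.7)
The plan is to mirror Proposition~\ref{prop:apply_exact_first}, exhibiting the $W$-filtered cochain complex underlying~\eqref{eq:s_spec_seq} as quasi-isomorphic to the one obtained from $\rmB(K,\rmC(Z),K)$ by tensoring with the graded $K$-vector space $V_s=\rmH^\bullet(Y^s,Y^{s-1})$. Such an equivalence of $W$-filtered complexes induces an isomorphism of associated spectral sequences from their $\rmE_1$ pages on, which is what we want.

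First, by the relative Eilenberg--Moore equivalence (Proposition~\ref{prop:relativeEM}), identify $\Gr_F^sB$ with the bar complex $\rmB(K,\rmC(Z),\rmC(Y^s,Y^{s-1}))$ carrying the bar-length filtration $W$. The key observation is that $V_s$ is concentrated in the single cohomological degree $s$, whereas the augmentation ideal $\overline{\rmH}^\bullet(Z)$ consists of elements of strictly positive degree; hence any $\rmH^\bullet(Z)$-module action on $V_s$ must be zero on $\overline{\rmH}^\bullet(Z)$. More generally, any higher $A_\infty$-operation $\mu_n\colon\overline{\rmH}^\bullet(Z)^{\otimes n-1}\otimes V_s\to V_s$ (with $n\geqslant 2$) lands in degrees $\geqslant s+1$ and is therefore forced to vanish. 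Consequently the $\rmC(Z)$-DG-module $\rmC(Y^s,Y^{s-1})$ is formal: over the field $K$ there is a $\rmC(Z)$-linear quasi-isomorphism between it and $V_s$ equipped with the trivial action via the augmentation.

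Substituting this equivalence into the bar construction produces a quasi-isomorphism of $W$-filtered cochain complexes
\[
\rmB(K,\rmC(Z),K)\otimes V_s \;=\; \rmB(K,\rmC(Z),V_s)\;\overset{\sim}{\longrightarrow}\;\rmB(K,\rmC(Z),\rmC(Y^s,Y^{s-1})),
\]
where the first equality records that the trivial right action makes the external bar differential factor as the bar differential on $\rmB(K,\rmC(Z),K)$ tensored with $\id_{V_s}$. The spectral sequence of the leftmost complex is, by Proposition~\ref{prop:identifyingEMSS} applied to the loop-space fibration $\Omega Z\to\star\to Z$, the Eilenberg--Moore spectral sequence for $\Omega Z$ with the exact functor $?\mapsto?\otimes V_s=\Cell^s(Y;?)$ applied level-wise (exactness holds because $V_s$ is a free $K$-module). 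Combining this with the identification of the rightmost complex with $\Gr_F^sB$ finishes the proof from $\rmE_1$ on.

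The main obstacle is the formality step. While the vanishing of all algebraic operations on $V_s$ is immediate from the degree-concentration argument, promoting this to an actual chain-level $\rmC(Z)$-linear quasi-isomorphism requires either an explicit homological-perturbation construction (built by induction on cellular degree, using that all potential obstructions live in the zero group $V_s^{>s}$) or an appeal to standard $A_\infty$-transfer results for modules whose cohomology is concentrated in a single degree. With formality in hand, the remainder is straightforward functorial bookkeeping closely parallel to Proposition~\ref{prop:apply_exact_first}.
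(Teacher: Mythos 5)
Your proposal is correct and follows essentially the same route as the paper: identify $\Gr_F^sB$ with $\rmB(K,\rmC(Z),\rmC(Y^s,Y^{s-1}))$, replace the third argument by its cohomology $\rmH^\bullet(Y^s,Y^{s-1})$ concentrated in degree~$s$, and pull that out of the bar construction as a tensor factor, leaving $\Cell^s(Y;\rmB(K,\rmC(Z),K))$ with its bar filtration. The one place where you go beyond the paper's own argument is the formality step: the paper simply asserts that a chosen quasi-isomorphism $\rmH^\bullet(Y^s,Y^{s-1})\to\rmC(Y^s,Y^{s-1})$ induces a filtered equivalence of bar complexes, whereas you correctly note that this requires compatibility with the $\rmC(Z)$--module structures and supply the degree argument (every transferred operation would have to land above degree~$s$, where $\rmH^\bullet(Y^s,Y^{s-1})$ vanishes) that legitimizes replacing the module by the trivial one.
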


\begin{proof}
Again, there is essentially only one reasonable way to prove this: we show that the two spectral sequences are induced by two equivalent cochain complexes with two equivalent filtrations. And we already know from Proposition~\ref{prop:description(3.5)} that the~$\rmE_1$ pages are isomorphic: The spectral sequence~\eqref{eq:s_spec_seq} originates in the filtered complex 
\[
\Gr_F^sB=\Gr_F^s\rmB(K,\rmC(Z),\rmC(Y))\cong\rmB(K,\rmC(Z),\rmC(Y^s,Y^{s-1})),
\] 
with the~$W$--filtration from the bar construction. 

We can choose an equivalence
\begin{equation}\label{eq:eq}
\rmH^\bullet(Y^s,Y^{s-1})\overset{\sim}{\longrightarrow}\rmC(Y^s,Y^{s-1})
\end{equation}
from the zero-differential complex~$\rmH^\bullet(Y^s,Y^{s-1})$, concentrated in degree~$s$, to the cochain complex~$\rmC(Y^s,Y^{s-1})$. This is clear when working over the field~$K$, but we note that this does not just give an equivalence of~$K$--cochain complexes, but of~$\rmC(Y)$--cochain complexes, as~$\rmC(Y)$ acts trivially on the relative cochain complex~$\rmC(Y^s,Y^{s-1})$, i.e., via the ground field~$K$: if~$a\in\rmC^{\deg(a)}(Y)$ and~\hbox{$b\in\rmC^{\deg(b)}(Y^s,Y^{s-1})$} are two cochains, then their cup product in~$\rmC^{\deg(a)+\deg(b)}(Y^s,Y^{s-1})$ is defined by~\hbox{$(a\cup b)(y)=\pm a(y_{\deg(a)})\cdot b(y_{\deg(b)})$}, where~$y_{\deg(a)}$ is the~$\deg(a)$--front and~$y_{\deg(b)}$ is the~$\deg(b)$--back of the given simplex~$y$. If~$\deg(a)+\deg(b)=s$, we see that~$\deg(a)>0$ implies~$\deg(b)<s$, and then~$b(y_{\deg(b)})=0$ for~$b\in\rmC^{\deg(b)}(Y^s,Y^{s-1})$.

The equivalence~\eqref{eq:eq} of~$\rmC(Y)$--complexes induces an equivalence
\[
\rmB(K,\rmC(Z),\rmH^\bullet(Y^s,Y^{s-1}))\simeq\rmB(K,\rmC(Z),\rmC(Y^s,Y^{s-1})),
\]
which, by construction, respects the bar filtrations that we have on both sides. The right-hand side leads to the spectral sequence~\eqref{eq:s_spec_seq}. The left-hand side is isomorphic, again respecting the bar filtrations, to~$\Cell^s(Y;\rmB(K,\rmH^\bullet(Z),K))$. Since the latter is the result of applying the functor~${?}\mapsto\Cell^s(Y;{?})$ to the filtered cochain complex~$\rmB(K,\rmH^\bullet(Z),K)$, the result follows.
\end{proof}

\begin{remark}\label{rem:nil_2}
The preceding result implies an agreement
\[
\rmH^n(X^s,X^{s-1})\cong\Cell^s(Y;\rmH^{t+u}(\Omega Z))
\]
of the abutments of the two spectral sequences, too. Such a statement is a standard part of the identification of the~$\rmE_1$ page of the Leray--Serre spectral sequence, at least in the case when the fundamental group of the base acts trivially on the cohomology of the fibre. In our situation, the fundamental group of the space~$Y$ need not be trivial, but the action on~$\rmH^\bullet(\Omega Z)$ is, because it factors through the morphism~\hbox{$\pi_1(Y)=\pi_0(\Omega Y)\to\pi_0(\Omega Z)=\pi_1(Z)$}, and this is trivial if the space~$Z$ is simply-connected.
\end{remark}

\begin{proof}[Proof of Theorem~\ref{thm:EM_is_local}]
One direction is clear: if the spectral sequences degenerate for all spaces~$Y$, then, in particular, the ones for contractible spaces do so. 

For the other direction, assume that the Eilenberg--Moore spectral sequence for the loop space degenerates from its~$\rmE_2$ page on. Applying any exact functor to it yields another spectral sequence that also degenerates from its~$\rmE_2$ page on. As we have seen in Proposition~\ref{prop:apply_exact}, when we apply the exact functor~\hbox{${?}\mapsto\Cell^s(Y;{?})$}, we get the spectral sequence~\eqref{eq:s_spec_seq}. Therefore, this has to degenerate from its~$\rmE_2$ page on, too.
\end{proof}

\begin{corollary}\label{cor:E2}
If Theorem~\ref{thm:EM_is_local} applies, and the spectral sequence~\eqref{eq:s_spec_seq} does indeed degenerate from~$\rmE_2$ on, we get
\[
\rmH^n(X^s,X^{s-1})\cong
\bigoplus_{t+u=n-s}\Tor^{\rmH^\bullet(Z)}_{-t}(K,K)^u\otimes\rmH^\bullet(Y^s,Y^{s-1})
\]
for the~$\rmE_1$ page of the Leray--Serre spectral sequence. This holds whenever the space~$Z$ is~$K$--unbarred.
\end{corollary}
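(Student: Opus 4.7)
The plan is to read the corollary off directly from two inputs already in hand: the hypothesis on $Z$ combined with Theorem~\ref{thm:EM_is_local}, and the explicit identification of the $\rmE_2$ page of the prelude spectral sequence~\eqref{eq:s_spec_seq} given in Proposition~\ref{prop:description(3.5)}. No genuinely new argument is required; the task is really just to align the indices and to note that over a field the filtration on the abutment splits.

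First I would unpack the hypothesis. By Definition~\ref{def:unbarred}, saying that $Z$ is $K$--unbarred is precisely the statement that the Eilenberg--Moore spectral sequence~\eqref{eq:EMSS_loops} for the loop space $\Omega Z$ degenerates from its $\rmE_2$ page on. Theorem~\ref{thm:EM_is_local} then upgrades this local degeneration (the case $Y \simeq \star$) to the global statement that, for every $Y$, the prelude spectral sequence~\eqref{eq:s_spec_seq} also degenerates from its $\rmE_2$ page on. In particular $\rmE_\infty^{s,t,u} \cong \rmE_2^{s,t,u}$ for every $(s,t,u)$.

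Next I would plug in Proposition~\ref{prop:description(3.5)}, which gives
\[
\rmE_2^{s,t,u} = \Cell^s(Y;\,\Tor^{\rmH^\bullet(Z)}_{-t}(K,K)^u)
\]
and identifies the abutment of~\eqref{eq:s_spec_seq} as $\rmH^n(X^s,X^{s-1})$ with $n = s + t + u$. For fixed $s$ and $n$ the diagonal $t + u = n - s$ carries only finitely many nontrivial contributions, since the bar filtration~\eqref{eq:W_filtration} forces $t \leqslant 0$ and $u \geqslant 0$, so convergence of the spectral sequence is automatic. Summing along this diagonal extracts the associated graded of the abutment, and because we are working over the field $K$ the underlying filtration of the vector space $\rmH^n(X^s,X^{s-1})$ splits, so the associated graded recovers the abutment itself. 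Applying finally the definition $\Cell^s(Y;V) = \rmH^\bullet(Y^s,Y^{s-1}) \otimes V$ from Section~\ref{sec:cellular} and commuting the tensor product over $K$ delivers the stated formula. The only point that needs any caution, rather than being a true obstacle, is this final splitting of the filtration; it is standard over a field but would fail over a ring admitting nontrivial extensions.
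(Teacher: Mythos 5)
Your proposal is correct and follows essentially the same route as the paper, which simply cites Theorem~\ref{thm:EM_is_local} together with the description of the~$\rmE_2$ term and the abutment in Proposition~\ref{prop:description(3.5)}. Your additional remarks on the finiteness of the diagonal~$t+u=n-s$ and the splitting of the filtration on the abutment over the field~$K$ are correct points that the paper leaves implicit.
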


\begin{proof}
This follows immediately from what has been said before and the description of the~$\rmE_2$ term in Proposition~\ref{prop:description(3.5)}.
\end{proof}

We can summarize our findings by saying that, if we want to compute the cohomology~$\rmH^\bullet(X)$ of~$X$ by first running the spectral sequence~\eqref{eq:s_spec_seq} and then the Leray--Serre spectral sequence, then the cellular structure of~$Y$ does not interfere with the first spectral sequence~\eqref{eq:s_spec_seq} at all, only afterwards does it become relevant, in the Leray--Serre spectral sequence.


The following formulation applies our results to~$K$--minimal spaces.

\begin{theorem}
In our situation~\eqref{eq:XYZ}, if the space~$Y$ is~$K$--minimal, then there is a spectral sequence
\[
\rmE_2^{s,t,u}=\rmH^s(Y;\Tor_{-t}^{\rmH^\bullet(Z)}(K,K)^u)
\Longrightarrow
\rmH^s(Y;\rmH^{t+u}(\Omega Z))
\]
that computes the~$\rmE_2$ page of the Leray--Serre spectral sequence for the cohomology of~$X$. It degenerates from its~$\rmE_2$ page on if, in addition, the space~$Z$ is~$K$--unbarred.
\end{theorem}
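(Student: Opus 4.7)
The plan is to specialize the prelude to the Leray--Serre spectral sequence from Section~\ref{sec:preLS} to the case when $Y$ is $K$-minimal, and then to invoke Theorem~\ref{thm:EM_is_local} for the degeneracy clause. The candidate spectral sequence is already furnished by Proposition~\ref{prop:description(3.5)}: for each fixed $s$, it has
\[
\rmE_2^{s,t,u} = \Cell^s(Y; \Tor^{\rmH^\bullet(Z)}_{-t}(K,K)^u)
\]
and abuts to $\rmH^n(X^s, X^{s-1})$, with $n = s+t+u$. What remains is twofold: to rewrite both the $\rmE_2$ page and the abutment as honest cohomology groups $\rmH^s(Y;-)$, and to identify the latter with the $\rmE_2$ page of the Leray--Serre spectral sequence for $X$.

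For the first part, I would exploit the $K$-minimality of $Y$ to choose a CW-structure on $Y$ whose cellular $K$-cochain complex has trivial differential. Since $\Cell^s(Y;?)$ is, up to a fixed tensor factor, just the identity on $K$-vector spaces, the cellular cochain complex with coefficients in any $K$-vector space $V$ carries the trivial differential, whence $\Cell^s(Y;V) = \rmH^s(Y;V)$ for every such $V$. Substituting $V = \Tor^{\rmH^\bullet(Z)}_{-t}(K,K)^u$ gives exactly the $\rmE_2$ page stated in the theorem.

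For the second part, I would appeal to Remark~\ref{rem:nil_2}, which identifies $\rmH^n(X^s, X^{s-1})$ with $\Cell^s(Y; \rmH^{n-s}(\Omega Z))$, and then reapply the same trivialization of $\Cell^\bullet$ under $K$-minimality to rewrite this as $\rmH^s(Y; \rmH^{t+u}(\Omega Z))$, using $n-s = t+u$. I note that $K$-minimality likewise forces the $\rmd_1$ differential of the Leray--Serre spectral sequence for $X \to Y$ to vanish, so that its $\rmE_1$ and $\rmE_2$ pages agree; thus our spectral sequence indeed computes the Leray--Serre $\rmE_2$ page rather than merely its $\rmE_1$ page.

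The degeneracy assertion is then immediate: if $Z$ is $K$-unbarred, Theorem~\ref{thm:EM_is_local} guarantees that the prelude~\eqref{eq:s_spec_seq} degenerates from its $\rmE_2$ page on, and the identifications above transport this to the reindexed spectral sequence. I do not anticipate any real obstacle beyond bookkeeping---the substantive work has already been done in Proposition~\ref{prop:description(3.5)}, Remark~\ref{rem:nil_2}, and Theorem~\ref{thm:EM_is_local}. The only point that deserves explicit care is the passage from $K$-minimality (a property of the cellular chain complex with $K$-coefficients) to the identification $\Cell^s(Y;V) = \rmH^s(Y;V)$ for arbitrary coefficient vector spaces $V$, which is automatic because we work over a field.
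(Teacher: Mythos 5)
Your proposal is correct and follows essentially the same route as the paper's own proof: Proposition~\ref{prop:description(3.5)} supplies the spectral sequence, $K$--minimality gives the identification $\Cell^s(Y;?)\cong\rmH^s(Y;?)$, Remark~\ref{rem:nil_2} identifies the abutment, and Theorem~\ref{thm:EM_is_local} yields the degeneration. Your extra remark that $K$--minimality also kills the $\rmd_1$ of the Leray--Serre spectral sequence, so that the abutment is genuinely its $\rmE_2$ page and not merely its $\rmE_1$ page, is a correct and welcome clarification of a point the paper leaves implicit.
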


\begin{proof}
When~$Y$ is~$K$--minimal, we have~$\Cell^s(Y;{?})\cong\rmH^s(Y;{?})$. Then, the spectral sequences in question are those from Proposition~\ref{prop:description(3.5)}, and the abutment is identified in Remark~\ref{rem:nil_2}. Theorem~\ref{thm:EM_is_local} gives the statement about the degeneration.
\end{proof}


\section{Examples}\label{sec:examples}

In this section, we briefly discuss the most critical test cases of the situation~\eqref{eq:XYZ} for our purposes. In particular, we discuss the spectral sequences when~\hbox{$X\simeq\star$}, or~\hbox{$Y\simeq\star$}, or~\hbox{$Z\simeq\star$}, respectively, and the three principal fibrations that feature the Hopf fibration~\hbox{$\eta\colon\rmS^3\to\rmS^2$}. These examples are chosen to be easy to work out; their purpose here is to supply evidence for the applicability of our theory. 


\subsection{Examples involving contractible spaces}

\begin{example}\label{ex:LS_diffs}
First, we consider the fibration sequences~$\Omega Z\to X\to Y\to Z$ where~$Y\simeq X\times Z$ and the map~$Y\to Z$ is equivalent to the projection. 
The Eilenberg--Moore spectral sequence
\[
\rmE^{p,q}_2=\Tor^{\rmH^\bullet(Z)}_{-p}(K,\rmH^\bullet(X\times Z))^q\Longrightarrow\rmH^{p+q}(X)
\]
always degenerates from~$\rmE_2$ on, because~$\rmH^\bullet(X\times Z)\cong\rmH^\bullet(X)\otimes\rmH^\bullet(Z)$ is a free~$\rmH^\bullet(Z)$--module, so that the~$\rmE_2$ page is just~$\rmH^\bullet(X)$ concentrated in the~$p=0$ column.
In contrast, the Leray--Serre spectral sequence can have arbitrarily long differentials, even in the simplest case of the situation, when the fibre~\hbox{$X\simeq\star$} is contractible so that the map~$Y\to Z$ is an equivalence. For a specific example, take~$Y=\rmS^r$ a sphere of dimension~$r\geqslant2$. Then, the Leray--Serre spectral sequence
\[
\rmE^{s,t}_2=\rmH^s(\rmS^r;\rmH^t(\Omega\rmS^r))\Longrightarrow\rmH^{s+t}(\star)
\]
has a nontrivial~$\rmd_r$. In contrast, the Eilenberg--Moore spectral sequence is concentrated in~$\rmE^{0,0}$, independent of the spaces~\hbox{$Y\simeq Z$}. The situation is even worse for spaces like~$\mathrm{BU}$.
\end{example}

\begin{example}\label{ex:EM_diffs}
We consider the fibration sequences~$\Omega Z\to X\to Y\to Z$ for spaces~$Y$ such that the spectral sequence
\[
\rmE^{s,t}_1=\rmH^{s+t}(Y^s,Y^{s-1})\Longrightarrow\rmH^{s+t}(Y),
\]
which is concentrated in the~$t=0$ row and which always degenerates from~$\rmE_2$ on, already degenerates from~$\rmE_1$ on. These are the spaces that we called~$K$--minimal~for the field~$K$ in Definition~\ref{def:minimal}. The results from Section~\ref{sec:preEM}, especially Theorem~\ref{thm:LS_is_short} and its Corollary~\ref{cor:factors}, apply. A fundamental example of the situation here is the case when the total space
$Y\simeq\star$
is contractible, so that the map~$\Omega Z\to X$ is an equivalence. 
In this case, the Leray--Serre spectral sequence
\[
\rmE^{s,t}_2=\rmH^s(\star;\rmH^t(\Omega Z))\Longrightarrow\rmH^{s+t}(\Omega Z)
\]
always degenerates from the beginning because it is concentrated in the~$s=0$ column. This even includes the~$\rmE_1$ page, because we can assume the cellular filtration to be trivial. 
In contrast, the Eilenberg--Moore spectral sequence 
\[
\rmE^{p,q}_2=\Tor^{\rmH^\bullet(Z)}_{-p}(K,K)^q\Longrightarrow\rmH^{p+q}(\Omega Z)
\]
can have arbitrarily long differentials. Specifically, take~$Z=\rmB\bbZ/\ell$ for an odd prime~$\ell$. Then, the Eilenberg--Moore spectral sequence
\[
\rmE^{p,q}_2=\Tor^{\rmH^\bullet(\rmB\bbZ/\ell)}_{-p}(\bbF_\ell,\bbF_\ell)^q\Longrightarrow\rmH^{p+q}(\bbZ/\ell)
\]
has a nonzero differential~$\rmd_{\ell-1}$~(see Baker--Richter~\cite[Sec.~6]{Baker+Richter} and compare Smith~\cite[Sec.~3]{Smith:IJM}). Eventually, the~$\ell$--dimensional~$\rmE_\infty$ page is spread out into~$1$--dimensional pieces for the Eilenberg--Moore spectral sequence, whereas it is concentrated in the~$\rmE^{0,0}_\infty$ spot for the Leray--Serre spectral sequence.
\end{example}

\begin{remark}
The preceding example contradicts the statement of~\cite[Ex.~8.12]{McCleary}: the Eilenberg--Moore spectral sequence does not have to degenerate at~$\rmE_2$, even if the Leray--Serre spectral sequence has all differentials coming from transgressions. Besides, we learned from Schochet~(private communication) that the attribution of that exercise to him is inaccurate.
\end{remark}

\begin{example}
We consider the fibration sequences~$\Omega Z\to X\to Y\to Z$ for spaces~$Z$ such that the Eilenberg--Moore spectral sequence
\[
\rmE^{p,q}_2=\Tor_{-p}^{\rmH^\bullet(Z)}(K,K)^q\Longrightarrow\rmH^{p+q}(\Omega Z)
\]
degenerates from its~$\rmE_2$ page on. These are the spaces that we called~$K$--unbarred in Definition~\ref{def:unbarred}. The results from Section~\ref{sec:preLS}, especially Theorem~\ref{thm:EM_is_local} and its Corollary~\ref{cor:E2}, apply. A special case of this situation is when the base space
$Z\simeq\star$
is contractible so that the map~$X\to Y$ is an equivalence. This is another example of product fibrations as in Example~\ref{ex:LS_diffs}. In this case, the Eilenberg--Moore spectral sequence has an~$\rmE_2$ page that is concentrated on the~$p=0$ column; it degenerates already from~$\rmE_1$ on. The Leray--Serre spectral sequence has an~$\rmE_2$ page that is concentrated on the~$t=0$ line; it degenerates from~$\rmE_2$ on, but on the~$\rmE_1$ page one might still see differentials from the cellular structure on~$Y$. As we have already mentioned in Example~\ref{ex:good_spaces_2}, more interesting instances of the situation here are given in the case when the base space~$Z$ is a torus~$(\rmS^1)^d$ or a sphere~$\rmS^d$ where~$K$ is any field, but also~$\bbR P^\infty\simeq\rmB\bbZ/2$ when the characteristic of the field~$K$ is~$2$.
\end{example}


\subsection{Principal fibrations involving the Hopf map}

\begin{example}\label{ex:not_dec}
Consider the fibration sequence
$
\rmS^3
\overset{\eta}{\longrightarrow}\rmS^2
\longrightarrow\bbC\rmP^\infty
\longrightarrow\bbH\rmP^\infty.
$
The projective spaces are~$K$--minimal and~$K$--unbarred, so that we are in the best possible situation. Since the cohomology ring~$\rmH^\bullet(\bbC\rmP^\infty)$ is free as a module over the algebra~$\rmH^\bullet(\bbH\rmP^\infty)$, on two generators in degree~$0$ and~$2$, the Eilenberg--Moore spectral sequence degenerates at its~$\rmE_2$ page to compute~$\rmH^\bullet(\rmS^2)$. On the other hand, the Leray--Serre spectral sequence needs infinitely many~$\rmd_4$'s to achieve the same goal. Thus, there is no way that a reindexing of the pages using Deligne's {\it d\'ecalage} \cite[1.3.3, 1.3.4]{Deligne:II} could explain the relationship between the two spectral sequences as a d\'ecalage effect.

\end{example}

\begin{example}\label{ex:E3deg}
Consider the fibration sequence
$
\rmS^1\longrightarrow\rmS^3
\overset{\eta}{\longrightarrow}\rmS^2
\longrightarrow\bbC\rmP^\infty.
$
The spheres and the projective space are all~$K$--minimal and~$K$--unbarred so that we are in the best possible situation. When computing~$\rmH^\bullet(\rmS^3)$, the Eilenberg--Moore spectral sequence degenerates from its~$\rmE_2$ page on, but the Leray--Serre spectral sequence needs a nontrivial~$\rmd_2$ differential before it degenerates from its~$\rmE_3$ page on.
\end{example}

\begin{example}
Consider the fibration sequence
$
\Omega\rmS^2\longrightarrow
\rmS^1\longrightarrow\rmS^3
\overset{\eta}{\longrightarrow}\rmS^2.
$
The spheres are all~$K$--minimal and~$K$--unbarred, so that we are in the best possible situation. We look at the two spectral sequences that compute~$\rmH^\bullet(\rmS^1)$. We see that the Eilenberg--Moore spectral sequence has non-trivial~$\rmd_2$ differentials, and the Leray--Serre spectral sequence has non-trivial~$\rmd_3$ differentials. The theory in Section~\ref{sec:comparison} applies. Both preludes degenerate and show that the Eilenberg--Moore spectral sequence and the Leray--Serre spectral sequence are bigraded incarnations of the same tri-graded spectral sequence. In fact, we have
\[
\rmE^{s,t,u}_1
=\Cell^s(\rmS^3;(\overline{\rmH}^\bullet(\rmS^2)^{\otimes-t})^u)
=
\begin{cases}
K & s\in\{0,3\}\text{ and }t\geqslant0\text{ and }u=2t\\
0 & \text{otherwise}.
\end{cases}
\]
from~\eqref{eq:E1stu}. Under the projections~\eqref{eq:W_iso} and~\eqref{eq:F_iso}, these give rise to the usual~$\rmE_2$ pages, and the index transformation~\eqref{eq:as_in_decalage} shifts the Eilenberg--Moore~$\rmd_2$ differential to a Leray--Serre~$\rmd_3$.
\end{example}


\section*{Acknowledgement}

We thank the referee for their suggestions to improve the exposition.


\parskip5.5pt


\vfill
\parbox{\linewidth}{%
Dipartimento di Matematica `Felice Casorati',
Universit\`a di Pavia,
via Ferrata, 5, 
27100 Pavia,
ITALY\\ 
\href{mailto:frank.neumann@unipv.it}{frank.neumann@unipv.it}}

\vspace{\baselineskip}

\parbox{\linewidth}{%
Department of Mathematical Sciences,
NTNU Norwegian University of Science and Technology,
7491 Trondheim,
NORWAY\\ 
\href{mailto:markus.szymik@ntnu.no}{markus.szymik@ntnu.no}}

\vspace{.1\baselineskip}

\parbox{\linewidth}{%
School of Mathematics and Statistics,
The University of Sheffield,
Sheffield S3 7RH,
UNITED KINGDOM\\ 
\href{mailto:m.szymik@sheffield.ac.uk}{m.szymik@sheffield.ac.uk}}
\end{document}